\newtheorem{remark}[theorem]{Remark}
\title{Condition number estimates for matrices arising in NURBS based
isogeometric discretizations of elliptic partial differential equations\thanks{This research was supported in part by
the Austrian Sciences Fund (Project P21516-N18),
National Science Foundation grant EPS-1135483,
Award No. KUS-C1-016-04, made by King Abdullah University of Science \&
Technology (KAUST),
and
the KAUST Numerical Porous Media Center.}}
\author{Krishan P. S. Gahalaut\thanks{Division of Computer, Electrical and Mathematical Sciences and
Engineering,
King Abdullah University of Science and Technology,
Thuwal 23955-6900, Kingdom of Saudi Arabia.
(\email{krishan.gahalaut@kaust.edu.sa}). Questions, comments, or corrections
to this document may be directed to that email address.}\and{Satyendra K. Tomar\thanks{Dornacher Strasse 6/21, 4040 Linz, Austria.
(\email{tomar.sk.prof@gmail.com}).}}\and{Craig C. Douglas\thanks{School of Energy Resources and Mathematics Department,
University of Wyoming, Laramie, WY 82071-3036, USA.
(\email{craig.c.douglas@gmail.com}).}}}
\begin{document}
\maketitle
\slugger{sinum}{xxxx}{xx}{x}{x--x}%slugger should be set to mms, siap, sicomp, sicon, sidma, sima, simax, sinum, siopt, sisc, or sirev

%%%%%%%%%%%%%%%%%%%%%%%%%%%%%%%%%%%%%%%%%%%%%%%%%%%%%%%

\begin{abstract}
We derive bounds for the minimum and maximum eigenvalues and the spectral
condition number of matrices for isogeometric discretizations of elliptic
partial differential equations in an open, bounded, simply connected Lipschitz
domain $\Omega\subset \mathbb{R}^d$, $d\in\{2,3\}$.
We consider refinements based on mesh size $h$ and polynomial degree $p$ with 
maximum regularity of spline basis functions.  For the $h$-refinement, the
condition number of the stiffness matrix is bounded above by a constant times
$ h^{-2}$ and the condition number of the mass matrix is uniformly bounded.
For the $p$-refinement, the condition number grows exponentially and is
bounded above by $p^{2d+2}4^{pd}$ and $p^{2d}4^{pd}$ for the stiffness
and mass matrices, respectively.
Rigorous theoretical proofs of these estimates and supporting
numerical results are provided.
\end{abstract}

\begin{keywords}
Elliptic PDEs, 
Galerkin formulation,
B-Splines,
NURBS,
Isogeometric method,
Stiffness matrix,
Mass matrix,
$h$-refinement,
$p$-refinement,
Eigenvalues, 
Condition number 
\end{keywords}

\begin{AMS}\end{AMS}

%%%%%%%%%%%%%%%%%%%%%%%%%%%%%%%%%%%%%%%%%%%%%%%%%%%%%%%

\pagestyle{myheadings}
\thispagestyle{plain}
\markboth{Gahalaut, Tomar and Douglas}{Condition number estimates for matrices arising in NURBS based discretizations}

%%%%%%%%%%%%%%%%%%%%%%%%%%%%%%%%%%%%%%%%%%%%%%%%%%%%%%%%%%%%%%%%%%

\section{Introduction}
\label{Sec:Introduction}

Isogeometric analysis is a term introduced by Hughes et al. in 2005
\cite{HughesCB-05}.
Most of the research activity in isogeometric analysis has focused on using
Non-Uniform Rational B-Spline (NURBS) as basis functions, e.g.,
\cite{AVBLovadinaRS-07, BVCHughesS-06, CHughesB-09, HughesCB-05}.
Isogeometric analysis is not restricted to NURBS basis functions.
Other types of basis functions are used by researchers, e.g., T-Splines,
hierarchical B-Splines, and subdivision schemes.
Use of splines as finite element basis functions dates back to the 1970's,
however \cite{Prenter-75, Schultz-73, Weiser-80}.

In isogeometric analysis the computational geometry (e.g., a circle) is
represented exactly from the information and the basis functions given by
Computer Aided Design (CAD).
It holds an advantage over classical finite element methods (FEM), where the
basis functions are defined using piecewise polynomials and the computational
geometry (i.e., a mesh) is defined on polygonal elements.
It has been argued in \cite{CHughesB-09} that NURBS based isogeometric method
leads to qualitatively more accurate results than a standard piecewise
polynomial based finite element method.
Typically, the solution computed by an isogeometric method has a higher
continuity than the one computed in a classical finite element method.
It is a difficult and cumbersome task to achieve even $C^{1}$ inter-element
continuity in the piecewise polynomial based finite element method, whereas
isogeometric method offers up to $C^{p-m}$ continuity, where $p$ denotes the
degree of the basis functions and $m$ denotes the knot-multiplicity.
Finally, isogeometric analysis provides a powerful tool to compute highly
continuous numerical solution of PDEs arising in engineering sciences.

Since the introduction of isogeometric analysis, most of its progress has been
focused on the applications and discretization properties.
Nevertheless, when dealing with large problems, the cost of solving the linear
system of equations arising from the isogeometric discretization becomes an
important issue.
Clearly, the discretization matrix $A$ gets denser by increasing the
polynomial degree $p$.
Therefore, the cost of a direct solver, particularly for large problems,
becomes prohibitively expensive.
The most practical way to solve them is to resort to an iterative method.
Since the convergence rate of such methods is strongly affected by the
condition number of the system matrix $A$, it is important to assess this
quantity as a function of the mesh size $h$ for the $h$-refinement, or as a
function of the degree $p$ for the $p$-refinement.
Note that in the $p$-refinement, improved approximate solutions are sought by
increasing $p$ while the mesh of the domain, and thus the maximum
quadrilateral diameter $h$, is held fixed, whereas in the $h$-refinement,
improved approximations are obtained by refining the mesh, and thus reducing
$h$, while $p$ is held fixed.
In this paper we consider both the cases: $h$- and $p$-refinements.
Similar efforts are made in \cite{Garoni-13} on the spectrum of stiffness
matrices and in \cite{Pilgerstorfer-14} on bounding the influence of the
domain parameterization and knot spacing.
However, these papers primarily derive bounds with respect to the mesh size
$h$.
To the best of our knowledge there is no study that discusses the bounds on
condition number estimates of isogeometric matrices with respect to
$p$-refinement.
Our main results provide upper bounds for the condition number of the
stiffness matrix and the mass matrix for both the $h$- and $p$-refinements.

For $h$-refinement applied to second order elliptic problems on a regular
mesh, the condition number of the finite element stiffness matrix scales as
$h^{-2}$ and the condition number of the mass matrix is bounded uniformly,
independent of $h$ \cite{Babuska-89}.
This is true for a great variety of elements and independent of the dimension
of the problem domain.
Our results here are in agreement \cite{Axelsson-01} and are useful in
theoretical analysis that relates to $h$-refinement.
For example, in convergence analysis of multigrid methods, these results are
one of the key elements in deriving convergence factors, for finite element
analysis \cite{Braess-07, Hackbusch-94, Saad-96} and for isogeometric analysis
\cite{Gahalaut-12}.

The order of the approximation error of the numerical
solution depends on the choice of the finite dimensional subspace, not on
the choice of its basis \cite{Ciarlet-78}.
Therefore, when working with a finite element method or an isogeometric method
for elliptic problems, we only consider function spaces rather than the
choice of particular basis functions.
Nevertheless, the choice of the basis functions affects the condition number
of the stiffness and the mass matrices, which influences the performance of
iterative solvers.
There is no general theory to characterize the extremal eigenvalues or the
condition number based on a set of general polynomial basis functions
\cite{Babuska-81, Maitre-96, Melenk-02, Melenk-96}.
Unlike the $h$-refinement case, the condition number heavily depends on the
choice of basis functions for the $p$-refinement.

For different choices of basis functions the condition number may grow
algebraically or exponentially.
Olsen and Douglas \cite{Olsen-95} estimated the condition number bounds of
finite element matrices for tensor product elements with two choices of basis
functions.
For Lagrange elements, it is proved that the condition number grows
exponentially in $p$.
For hierarchical basis functions based on Chebychev polynomials, the condition
number grows rapidly but only algebraically in $p$.
Similar results on the condition number bounds can be found in \cite{Ern-91,
Hu-98, Maitre-95}.

Due to the larger support of NURBS basis functions, the band of the stiffness
matrix corresponding to the NURBS-based isogeometric method is less sparse
than the one arising from piecewise polynomial finite element procedures.
Therefore, a larger condition number is expected.
Our results for the $p$-refinement case show that the condition number of
system matrices in an isogeometric method grows exponentially.

Throughout this paper we deal with the maximum regularity $C^{p-1}$ of a
B-spline unless otherwise specified.
The generic constant $C$, which will be used often takes different values at
different occasions, and is independent of $h$ and $p$ in the analysis with
respect to $h$-refinement and $p$-refinement, respectively.
Moreover, in our numerical studies the coarsest and finest meshes use $h=1$
and $h=1/128$, respectively.

The remainder of the paper   is organized as follows. 
In Section \ref{Sec:ModelProb}, we describe the model problem and its
discretization.
In Section \ref{Sec:SplineCondNum}, we define B-Splines and NURBS and a basic
notation.
We recall bounds for the condition number of a B-Spline basis function.
In Section \ref{Sec:StiffnessCondNum}, we derive bounds for the eigenvalues and
the condition number of the stiffness and mass matrices arising in
isogeometric discretizations for the $h$- and $p$-refinement cases.
In Section \ref{NumericalResults}, we provide numerical experiments that
support the theoretical estimates.
In Section~\ref{Sec:Conclusions}, we draw some conclusions and discuss future
work.

%%%%%%%%%%%%%%%%%%%%%%%%%%%%%%%%%%%%%%%%%%%%%%%%%%%%%%%

\section{Model problem and its discretization}
\label{Sec:ModelProb}

Let $\Omega\subset\mathbb{R}^{d}$, $d\in\{2,3\}$, be an open, bounded, and
simply connected Lipschitz domain with Dirichlet boundary $\partial \Omega$.
We consider the Poisson equation,
\begin{subequations}
\label{eq:Laplace}
\begin{alignat}{2}
\Delta u &= -f \quad \mathrm{in~} \quad  \Omega, \\
 u &= 0  \quad \mathrm{on~}\quad  \partial \Omega,
\end{alignat}
\end{subequations}
where $f:\Omega \rightarrow \mathbb{R}$ is given.
The aim is to find $u:(\Omega \cup \partial \Omega) \rightarrow \mathbb{R}$
that satisfies \eqref{eq:Laplace}.
We consider Galerkin's formulation of the problem, which is commonly used in
isogeometric analysis.
Since we are interested in the study of the condition number, therefore we
shall not go into the details of the solution properties, and restrict
ourselves to the study of the condition number of the resulting system
matrices.

Isogeometric analysis has the same theoretical foundation as finite element
analysis, namely the variational form of a partial differential equation.
We define the function space $\mathcal{S}$ as all the functions that have
square integrable derivatives and also satisfy $u|_{\partial \Omega}=0$,
\begin{equation}
\mathcal{S}= \{u:u\in H^1(\Omega),u|_{\partial \Omega}=0\},
\end{equation}
where $H^1(\Omega)=\{u:D^{\alpha}u\in L^2(\Omega),|\alpha|\leq1\}$ is a
Sobolev space, $\alpha \in \mathbb{N}^d$ is a multi-index,
$D^{\alpha}=D_1^{\alpha_1}D_2^{\alpha_2} \ldots  D_d^{\alpha_d}$, and
$D_i^j=\displaystyle \frac{\partial^j}{\partial x_i^j}$.

We write the variational formulation of the model problem by multiplying
it by an arbitrary function $v\in \mathcal{S}$ and integrating by parts.
For a given $f$: find $u\in \mathcal{S}$ such that for all $ v\in
\mathcal{S}$,
\begin{equation*}
\int_{\Omega} \nabla u\cdot \nabla v\hspace{.15cm}d\Omega = \int_{\Omega}  f v \hspace{.15cm}d\Omega.
\end{equation*}
We rewrite the formulation as: find $u\in \mathcal{S}$ such that for all $
v\in \mathcal{S}$,
\begin{equation}
a(u,v)=L(v),\quad \forall v\in\mathcal{S},
\end{equation}
where
\begin{equation*}
a(u,v)=\int_{\Omega} \nabla u\cdot \nabla v\hspace{.15cm}d\Omega, \quad
\text{ and } \quad
L(v)=\int_{\Omega} f v \hspace{.15cm}d\Omega.
\end{equation*}
Note that $a(\cdot,\cdot)$ is a bilinear form that is continuous and coercive
on $\mathcal{S}$.
$L(\cdot)$ is a linear form associated with the original equation.

Let $\mathcal{S}^h\subset\mathcal{S}$ be a finite dimensional approximation
of $\mathcal{S}$.
The Galerkin form of the problem is:  Find $u^h \in \mathcal{S}^h$ such that
for all $\text{ } v^h \in \mathcal{S}^h$,
\begin{equation}
\label{eq:weak_form_h}
a(u^h,v^h)=L(v^h),
\end{equation}
which is a well-posed problem with a unique solution \cite{Ciarlet-78}.

By approximating $u_h$ and $v_h$ using spline (see Section
\ref{Sec:SplineCondNum}) basis functions $N_{i}$, $i=1, 2, \ldots, 
n_h = \mathcal{O}(h^{-2})$, the variational formulation
\eqref{eq:weak_form_h} is
transformed into a set of linear algebraic equations,
\begin{equation}
\label{eq:LinSys}
A \bf u=f.
\end{equation}
$A$ denotes the stiffness matrix obtained from the bilinear form
$a(\cdot,\cdot)$,
\begin{equation*}
%\label{eq:LinSys}
A = (a_{i,j}) = (a(N_i,N_j)), \quad i,j=1,2,3,\ldots.,n_h.
\end{equation*}
$\bf u$ denotes the vector of unknown degrees of freedom and $\bf f$ denotes
the right hand side vector from the known data of the problem.
$A$ is a real, symmetric positive definite matrix.

%%%%%%%%%%%%%%%%%%%%%%%%%%%%%%%%%%%%%%%%%%%%%%%%%%%%%%%

\section{Splines and their condition number bounds}
\label{Sec:SplineCondNum}

Non-uniform rational B-Splines (NURBS) are commonly used in isogeometric
analysis and are built from B-Splines.
In Section \ref{Subsec:B-Splines+NURBS}, we give a brief description of
B-Splines and NURBS and their properties.
In Section \ref{Subsec:DerivativesOfBSplines}, we define the derivatives of
B-Splines.
In Section \ref{Subsec:CondNumOfBSplines}, we prove bounds on the condition
number of B-Spline basis functions.
 
% --------------------------------------

\subsection{B-Splines and NURBS}
\label{Subsec:B-Splines+NURBS}

In this section, we define B-Spline and NURBS functions.
We also define surfaces and describe higher order objects based on both types
of functions.

The Cox-de Boor reursion formula \cite{Boor-78} is given by
\begin{definition}
\label{Def:BSplines}
Let $\Xi_{1}=\{\xi_i :  i = 1, \ldots, n + p+1\}$ be a non-decreasing sequence of
real numbers called the $knot$ $vector$, where $\xi_i$ is the $i^{th}$ knot,
$p$ is the polynomial degree, and $n$ is the number of basis function.
With a knot vector in hand, the B-Spline basis functions denoted by $
N^p_{i}(\xi)$ are (recursively) defined starting with a piecewise constant
$(p=0)$:
\begin{subequations}
\label{eq:DefBSplines}
\begin{alignat}{3}
N^0_{i}(\xi) &= \begin{cases} 1 & \text{if $\xi \in [\xi_i,\xi_{i+1})$,} \\ 0 &\text{otherwise,} \end{cases}\\
\quad N^p_{i}(\xi) & =\frac{\xi-\xi_i}{\xi_{i+p}-\xi_i} N^{p-1}_{i}(\xi) +\frac{\xi_{i+p+1}-\xi}{\xi_{i+p+1}-\xi_{i+1}} N^{p-1}_{i+1}(\xi),
\end{alignat}
\end{subequations}
where $ 0 \leq i \leq n, p \geq 1$ and $\displaystyle \frac{0}{0}$ is considered as zero.
\end{definition}

For a B-Spline basis function of degree $p$, an interior knot can be repeated
at most $p$ times, and the boundary knots can be repeated at most $p+1$ times.
A knot vector for which the two boundary knots are repeated $p+1$ times is
said to be open.
In this case, the basis functions are interpolatory at the first and the last
knot.
Important properties of the B-Spline basis functions include nonnegativity,
partition of unity, local support and $C^{p-k}$-continuity.
 
Higher dimensional B-Spline objects are defined using tensor products.
\begin{definition}
\label{Def:BSplineCurve}
A B-Spline curve $C(\xi)$ is defined by
\begin{equation}
C(\xi)=\sum^{n}_{i=1} P_{i} N^p_{i}(\xi),
\end{equation}
where $\{P_i: i = 1, \ldots, n\}$ are the control points and $N^p_{i}$ are
B-Spline basis functions defined in (\ref{eq:DefBSplines}).
\end{definition}
\begin{definition}
\label{Def:BSplineSurface}
A B-Spline surface
$S(\xi,\eta)$ is defined by
\begin{equation}
\label{eq:BSurface}
S(\xi,\eta) = \sum_{i=1}^{n_{1}} \sum_{j=1}^{n_{2}} N_{i,j}^{p_{1},p_{2}} (\xi,\eta) P_{i,j},
\end{equation}
where $P_{i,j}$, $i = 1,2, \ldots, n_{1}$, $j = 1,2, \ldots, n_{2}$, denote
the control points, $N_{i,j}^{p_{1},p_{2}}$ is the tensor product of B-Spline
basis functions $N_{i}^{p_{1}}$ and $N_{j}^{p_{2}}$, and $\Xi_{1} = \{\xi_{1},
\xi_{2}, \ldots, \xi_{n_{1}+p_{1}+1}\}$ and $\Xi_{2} = \{\eta_{1}, \eta_{2},
\ldots, \eta_{n_{2}+p_{2}+1}\}$ are the corresponding knot vectors.
\end{definition}

Similarly three dimensional B-Spline solids can be defined using two tensor
products.

Polynomials cannot exactly describe frequently encountered shapes in
engineering, particularly the conic family (e.g., a circle).
While B-Splines are flexible and have many nice properties for curve design,
they are also incapable of representing such curves exactly.
Such limitations are overcome by NURBS functions that can exactly represent a
wide array of objects.

Rational representation of conics originates from projective geometry.
The ``coordinates'' in the additional dimension are called weights, which we
shall denote by $w$.
Furthermore, let $\{P^{w}_{i}\}$ be a set of control points for a projective
B-Spline curve in $\mathbb{R}^{3}$.
For the desired NURBS curve in $\mathbb{R}^{2}$, the weights and the control
points are derived by the relations
\begin{equation}
\label{eq:NWts}
w_{i} = (P^{w}_{i})_{3}, \qquad (P_{i})_{d} = (P^{w}_{i})/w_{i}, \quad d = 1,2,
\end{equation}
where $w_{i}$ is called the $i^{\mathrm{th}}$ weight and $(P_{i})_{d}$ is the
$d^{\mathrm{th}}$-dimension component of the vector $P_{i}$.
The weight function $w(\xi)$ is defined as
\begin{equation}
\label{eq:NWFunc}
w(\xi) = \sum_{i = 1}^{n}N_i^p(\xi) w_{i}.
\end{equation}

We now formally define NURBS objects.
\begin{definition}
\label{Def:NURBSCurve}
The NURBS basis functions and curve are defined by
\begin{equation}
\label{eq:NCurve}
R_i^p(\xi)  = \frac{N_i^p(\xi) w_{i}}{w(\xi)}\quad 
{\rm and}\quad
C(\xi) = \sum_{i=1}^{n} R_i^p(\xi) P_{i}.
\end{equation}
\end{definition}
\begin{definition}
\label{Def:NURBSSurface}
The NURBS surfaces are defined by
\begin{equation}
\label{eq:NSurface}
S(\xi,\eta) = \sum_{i=1}^{n_{1}} \sum_{j=1}^{n_{2}} R_{i,j}^{p_{1}, p_{2}}
(\xi,\eta) P_{i,j},
\end{equation}
where $R_{i,j}^{p_{1},p_{2}}$ is the tensor product of NURBS basis functions
$R_{i}^{p_{1}}$ and $R_{j}^{p_{2}}$.
\end{definition}
\noindent
NURBS functions also satisfy the properties of B-Spline functions
\cite{PieglTiller-97, Rozers-01, Schumaker-07}.

% --------------------------------------

\subsection{Derivatives of B-Splines}
\label{Subsec:DerivativesOfBSplines}

Derivatives of B-Splines \cite{Floater-92} and their conditioning are very
important for the estimation of the condition number of the stiffness matrix.
The recursive definition of B-Spline functions allow us to seek the
relationship between the derivative of a given B-Spline basis function and
lower degree basis function.
\begin{definition}
\label{def:DerBSplines}
The derivative of the $i^{th}$ B-Spline basis function defined in
(\ref{eq:DefBSplines}) is given by
\begin{equation}
\label{eq:BsplineDer}
\frac{d}{d\xi} N^p_{i}(\xi) =\frac{p}{\xi_{i+p}-\xi_i} N^{p-1}_{i}(\xi) -
\frac{p}{\xi_{i+p+1}-\xi_{i+1}} N^{p-1}_{i+1}(\xi).
\end{equation}
\end{definition}
\noindent
By repeated differentiation of (\ref{eq:BsplineDer}) we get the general
formula for any order derivative.
Since we are only interested in the first derivative, we ignore further
details \cite{Boor-78}.

The derivatives of rational functions will clearly depend on the derivatives
of their non-rational counterpart.
Definition \ref{def:DerBSplines} can be generalized for NURBS.
\begin{definition}
\label{def:DerNurbs}
The derivative of the $i^{th}$ NURBS basis function is given by
\begin{equation}
\label{Eq:NURBSDer}
\displaystyle\frac{d}{d\xi} R^p_{i}(\xi) = w_i \displaystyle \frac {w(\xi)
\displaystyle\frac{d}{d\xi}N^{p}_{i}(\xi) - \displaystyle \frac{d}{d\xi}w(\xi)
N^{p}_{i}(\xi)}{(w(\xi)^2)}.
\end{equation}
where $w_i$ and $w(\xi)$ are defined in (\ref{eq:NWts}) and (\ref{eq:NWFunc}),
respectively.
\end{definition}

% --------------------------------------

\subsection{Condition number of B-Splines}
\label{Subsec:CondNumOfBSplines}

In this section, we recall bounds for the condition number of  B-Splines.

We need to know the bounds on B-Spline basis functions in some $L_{s}$-norm,
where $s \in [1,\infty]$.
We estimate the size of the coefficients of a polynomial of degree $p$
in two dimensions when it is represented using the tensor product structure of B-Spline basis functions.
The condition number of a basis can be defined as follows.
\begin{definition}
A basis $\{N_i\}$ of a normed linear space is said to be stable with respect
to a vector norm if there are constants $K_1$ and $K_2$ such that for all
coefficients $\{v_i\}$ the following relation holds:
\begin{equation}
\label{eq:BasisCondP}
K_1^{-1}\|\{v_i\}\| \leq \Big\|\sum_{i} v_i N_i \Big\|\leq K_2\|\{v_i\}\|.
\end{equation}
The number $\kappa=K_1K_2$, with $K_1$ and $K_2$ as small as possible, is
called the condition number of $\{N_i\}$ with respect to $\|\cdot \|$.
Note that we use the symbols $\|\cdot \|$ and $\|\{\cdot \}\|$ for the norms
in the vector space and the vector norm, respectively.
\end{definition} 

Such condition numbers give an upper bound for magnification of the error in
the coefficients to the function values.
Indeed, if $\displaystyle f=\sum_{i} f_i N_i \neq 0$ and $\displaystyle
g=\sum_{i} g_i N_i $, then it follows immediately from \eqref{eq:BasisCondP}
that
\begin{equation*}
\displaystyle \frac{\|f-g\|}{\|f\|}\leq \kappa
\frac{\|\{f_i-g_i\}\|}{\|\{f_i\}\|}.
\end{equation*}
More details on the approximation properties and the stability of B-Splines
can be found in \cite{Hoellig-82, Hoellig-00, Hoellig-02, Lyche-97, Lyche-00,
MoessnerR-08, Pena-97}.
We use these estimates of $\kappa$ to estimate the bounds of the condition
number of the stiffness matrix and the mass matrix.

It is of central importance for working with B-Spline basis functions that its
condition number is bounded independently of the underlying knot sequence.\
That is, the condition number of B-Splines does not depend on the multiplicity
of the knots of knot vector \cite{Boor-72, Boor-76, Boor-76-2, Boor-74}.
In \cite{Boor-76} is a direct estimate that the worst condition number of a
B-Spline of degree $p$ with respect to any $L_s$-norm is bounded above by
$p9^p$.
It is also conjectured that the real value of $\kappa$ grows like $2^p$, which
is superior to the direct estimate:
\begin{subequations}
\begin{alignat}{3}
\kappa & < p9^p & \quad \text{(direct estimate)},\\
\label{eq:CondNumConjecture}
\kappa & \sim2^p & \quad \text{(de Boor's conjecture)}.
\end{alignat}
\end{subequations}
In \cite{Boor-90}, the exact condition number of a B-Spline basis is shown to
be difficult to determine.

Scherer and Shadrin \cite{Scherer-96} proved that the upper bound of the
condition number $kappa$ of a B-Spline of degree $p$ with respect to
$L_s$-norm is bounded by
\begin{equation}
\kappa  < \displaystyle p^{\frac{1}{2}} 4^p,
\end{equation}
which is closer to the conjecture in (\ref{eq:CondNumConjecture}).
Scherer and Shadrin \cite{Scherer-99} proved the following result.
\begin{lemma}
\label{Lemma:SS99}
For all $p$ and all $s \in [1,\infty]$,
\begin{equation}
\label{eq:kappas}
\kappa < p2^p.
\end{equation}
\end{lemma}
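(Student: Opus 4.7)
The plan is to verify the two-sided inequality \eqref{eq:BasisCondP} by controlling $K_1$ and $K_2$ separately so that $K_1 K_2 < p 2^p$. I would equip the coefficient space with the knot-weighted sequence norm
$$\|\{v_i\}\|_s^s = \sum_i |v_i|^s\, \frac{\xi_{i+p+1}-\xi_i}{p+1},$$
which is natural because $\|N_i\|_{L_s}^s$ is of the same order. With this normalization the upper constant $K_2$ is obtained from non-negativity, partition of unity, the $(p+1)$-fold overlap of the basis, and Hölder's inequality, giving $K_2 \leq C$ with $C$ an absolute constant. All of the subtlety therefore concentrates in $K_1$.

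For the lower estimate I would use a dual basis in the spirit of de Boor. For each index $i$, fix a knot interval $I_i \subset \operatorname{supp} N_i$ on which $N_i$ does not vanish identically. Using Marsden's identity, which expresses the truncated power $(\xi-y)^p$ as a B-spline expansion, one constructs a polynomial $D_i$ of degree $p$ on $I_i$ such that the functional
$$\lambda_i(f) := \int_{I_i} f(\xi)\, D_i(\xi)\, d\xi$$
is biorthogonal to the basis: $\lambda_i(N_j) = \delta_{ij}$. Then the coefficients of $f = \sum_j v_j N_j$ are recovered by $v_i = \lambda_i(f)$, and Hölder's inequality gives $|v_i| \leq \|D_i\|_{L_{s'}(I_i)}\, \|f\|_{L_s(I_i)}$ with $s'$ the conjugate exponent. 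Raising to the $s$-th power, summing in $i$, and using that each knot interval meets at most $p+1$ supports produces a bound of the shape
$$K_1 \leq C \max_i |I_i|^{1/s'} \|D_i\|_{L_{s'}(I_i)}.$$

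The real obstacle — and the whole point of the Scherer–Shadrin refinement over their earlier $p^{1/2} 4^p$ bound in \cite{Scherer-96} — is the sharp knot-independent estimate $|I_i|^{1/s'} \|D_i\|_{L_{s'}(I_i)} \leq c\, 2^p$ on the dual polynomial, uniform in $s \in [1,\infty]$ and in the knot configuration. After an affine change of variable to a reference interval and substitution of the explicit Marsden coefficients, this reduces to a polynomial extremal problem on a fixed interval. A refined Chebyshev/Bernstein-basis argument, exploiting the fact that a degree-$p$ polynomial whose B-spline coefficients lie in a controlled cone obeys a sup-norm bound growing like $2^p$ rather than $4^p$, supplies the sharp constant. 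Multiplying this with the $\mathcal{O}(1)$ bound for $K_2$ and absorbing the residual algebraic prefactor into a single $p$ yields $\kappa < p\, 2^p$, matching de Boor's conjectured exponential rate up to the $p$-prefactor.
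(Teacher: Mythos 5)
This lemma is not proved in the paper at all: it is quoted verbatim from Scherer and Shadrin \cite{Scherer-99} (``A proof of de Boor's $2^k$-conjecture''), so there is no in-paper argument to compare yours against. Judged on its own terms, your outline correctly identifies the standard framework in which every bound of this type (de Boor's $p9^p$, Scherer--Shadrin's $p^{1/2}4^p$ and $p2^p$) is obtained: normalize the coefficient norm by the knot weights $(\xi_{i+p+1}-\xi_i)/(p+1)$ so that the upper constant $K_2$ is $\mathcal{O}(1)$ by convexity and the partition of unity, and then reduce the lower constant $K_1$ to a bound on the dual functionals $\lambda_i(f)=\int_{I_i} f D_i$ built from Marsden's identity, with the local finiteness of the supports handling the summation over $i$. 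That part of your write-up is sound.

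The gap is that the entire content of the theorem sits in the one step you do not prove: the knot-independent estimate $|I_i|^{1/s'}\|D_i\|_{L_{s'}(I_i)}\le c\,2^p$. Your justification --- ``a refined Chebyshev/Bernstein-basis argument, exploiting the fact that a degree-$p$ polynomial whose B-spline coefficients lie in a controlled cone obeys a sup-norm bound growing like $2^p$'' --- is an assertion of the result, not a derivation of it; indeed, applied naively, Markov/Chebyshev-type extremal bounds on the dual polynomial are exactly what produce the weaker $4^p$ rate of \cite{Scherer-96}, and getting from $4^p$ down to $2^p$ required Scherer and Shadrin to solve a specific polynomial extremal problem with considerable effort (it is the subject of an entire second paper). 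As written, your argument would be circular at that point: you invoke a $2^p$ bound to prove a $2^p$ bound. The honest options are either to cite \cite{Scherer-99} for this step, as the paper does for the whole lemma, or to actually carry out the extremal-problem analysis, which is far beyond what your sketch supplies.
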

\noindent
Lemma~\ref{Lemma:SS99} confirms the conjecture
(\ref{eq:CondNumConjecture}) up to a polynomial factor.
Possible approaches to eliminate the polynomial factor are also discussed in
\cite{Scherer-99}.
Lemma~\ref{Lemma:SS99} can be easily generalized to $d$-dimensions.
\begin{lemma}
\label{Lemma:SS99-d-dims}
Using a tensor product B-Spline basis of degree $p$ in $d$-dimensions and
\eqref{eq:kappas}, the following is immediate:
\begin{equation}
\label{eq:BspKapD}
\kappa < (p2^p)^d.
\end{equation}
\end{lemma}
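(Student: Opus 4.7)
The plan is to derive the $d$-dimensional bound by induction on $d$, where each inductive step costs one factor of the univariate condition number from Lemma~\ref{Lemma:SS99}. Fix $s\in[1,\infty]$ and let $f(\xi_1,\ldots,\xi_d)=\sum_{i_1,\ldots,i_d} v_{i_1,\ldots,i_d} N_{i_1}^p(\xi_1)\cdots N_{i_d}^p(\xi_d)$, with the coefficient array measured in $\ell_s$ and the function measured in $L_s$ of the parameter domain. The case $d=1$ is exactly Lemma~\ref{Lemma:SS99}, which provides constants $K_1,K_2$ with $K_1K_2=\kappa<p2^p$.

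For the inductive step, I would slice off one coordinate at a time. Writing $c_{i_1}(\xi_2,\ldots,\xi_d)=\sum_{i_2,\ldots,i_d} v_{i_1,\ldots,i_d}N_{i_2}^p(\xi_2)\cdots N_{i_d}^p(\xi_d)$, we have $f(\xi_1,\ldots,\xi_d)=\sum_{i_1}c_{i_1}(\xi_2,\ldots,\xi_d)\,N_{i_1}^p(\xi_1)$. For each fixed $(\xi_2,\ldots,\xi_d)$ the univariate bound gives
\begin{equation*}
K_1^{-1}\|\{c_{i_1}(\xi_2,\ldots,\xi_d)\}\|_{\ell_s}\le \|f(\cdot,\xi_2,\ldots,\xi_d)\|_{L_s}\le K_2\|\{c_{i_1}(\xi_2,\ldots,\xi_d)\}\|_{\ell_s}.
\end{equation*}
Raising to the $s$-th power (for $s<\infty$) and integrating over $(\xi_2,\ldots,\xi_d)$, Tonelli's theorem lets us swap the sum in $i_1$ with the integral, converting the pointwise $\ell_s$-norms into $L_s$-norms of $c_{i_1}$, at the cost of a factor $K_1$ (resp.\ $K_2$). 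The case $s=\infty$ is even easier since sup commutes with sup. Applying the inductive hypothesis to each slice coefficient $c_{i_1}$, which is a $(d{-}1)$-variate tensor product B-spline expansion, contributes a further factor $K_1^{d-1}$ (resp.\ $K_2^{d-1}$), and the induction closes with $\kappa_d\le (K_1K_2)^d<(p2^p)^d$.

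The main obstacle is keeping the norm manipulations clean: one must make sure that the interchange of sum and integral (Tonelli) is justified uniformly for all $s\in[1,\infty]$, and that the inductive hypothesis is stated with the right pairing of $\ell_s$ and $L_s$ norms. Once this bookkeeping is in place, the factor $(p2^p)^d$ follows immediately as indicated by the word ``immediate'' in the statement, reflecting the separability of tensor product B-splines under $L_s$ norms.
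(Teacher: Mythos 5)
Your induction-by-slicing argument is correct and is precisely the standard justification the paper is invoking when it declares the tensor-product bound ``immediate'' from Lemma~\ref{Lemma:SS99}; the paper itself supplies no further proof. Your bookkeeping (pointwise univariate bound in one coordinate, Tonelli to convert the summed pointwise $\ell_s$-norms into $L_s$-norms of the slice coefficients, then the inductive hypothesis) closes correctly to $\kappa_d\le (K_1K_2)^d<(p2^p)^d$, so there is nothing to add beyond noting that your argument matches the intended one.
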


%%%%%%%%%%%%%%%%%%%%%%%%%%%%%%%%%%%%%%%%%%%%%%%%%%%%%%%

\section{Estimates of condition number}
\label{Sec:StiffnessCondNum}

In this section, we give estimates for the condition number of the stiffness
matrix (in Section~\ref{Subsec:CondNum4StiffnessMatrix}) and the mass matrix
(in Section~\ref{Subsec:CondNum4MassMatrix}) obtained from isogeometric
discretization.
In each case, we have bounds on the condition number with respect to both $h$-
and $p$-refinements.
For $h$-refinement, upper bounds for the maximum eigenvalues, a lower bound for
the minimum eigenvalue, and an upper bound for the condition number are given.
For $p$-refinement, we prove upper and lower bounds for the maximum eigenvalue,
lower bounds for the minimum eigenvalue, and upper bounds for the condition
number.

% --------------------------------------

\subsection{Stiffness matrix}
\label{Subsec:CondNum4StiffnessMatrix}

In this section, we give estimates for the condition number of the stiffness
matrix with estimates for $h$-refinement in
Section~\ref{Subsec:CondNum4StiffnessMatrix-h} and for $p$-refinement in
Section~\ref{Subsec:CondNum4StiffnessMatrix-p}.

% ------------------

\subsubsection{$h$-refinement}
\label{Subsec:CondNum4StiffnessMatrix-h}

%In this section for $h$-refinement, we give upper bounds for the maximum
%eigenvalues, a lower bound for the minimum eigenvalue, and an upper bound for
%the condition number are given.

Without loss of generality, we begin with a two-dimensional open parametric
domain $\Omega= (0,1)^{2}$ that we refer to as a \textit{patch}.
Given two open knot vectors $ \Xi_{1} = \{ 0 = \xi_1, \xi_2, \xi_3, \ldots,
\xi_{m_{1}} = 1 \}$ and $ \Xi_{2} = \{ 0 = \eta_1, \eta_2, \eta_3, \ldots,
\eta_{m_{2}} = 1 \}$, we partition the patch $\Omega$ into a mesh
\begin{equation*}
\mathcal{Q}_h = \{ Q = (\xi_{i}, \xi_{i+1}) \otimes (\eta_{j}, \eta_{j+1}), i
= p_{1}+1, 2, \ldots, m_{1}-p_{1}-1, j = p_{2}+1, 2, \ldots, m_{2}-p_{2}-1\},
\end{equation*}
where $Q$ is a two-dimensional open knot-span whose diameter is denoted by
$h_{Q}$.
We consider a family of quasi-uniform meshes $\{\mathcal{Q}_h\}_{h}$ on
$\Omega$, where $h = \max \{h_{Q} | Q \in \mathcal{Q}_h \}$ denotes the family
index \cite{BVCHughesS-06}.
Let $\mathcal{S}_h$ denote the B-spline space associated with the mesh
$\mathcal{Q}_h$.
Given two adjacent elements $Q_{1}$ and $Q_{2}$, we denote by $m_{Q_{1}Q_{2}}$ 
the number of continuous derivatives across their common face $\partial Q_{1}
\cap \partial Q_{2}$.
In the analysis, we will use the usual Sobolev space of order $m \in
\mathbb{N}$,
\begin{align}
\label{eq:BSspace}
\mathcal{H}^m(\Omega) = & \Big \{ v \in L^{2}(\Omega) \mathrm{~such~that~}
v|_{Q} \in H^{m}(Q), \forall Q \in \mathcal{Q}_h, \mathrm{~and} \\
& \nabla^{i}(v|_{Q_{1}}) = \nabla^{i}(v|_{Q_{2}}) \mathrm{~on~} \partial Q_{1}
\cap \partial Q_{2}, \nonumber\\
& \forall i \in \mathbb{N} \mathrm{~with~} 0 \le i \le \min \{m_{Q_{1}Q_{2}},
m-1 \}, \forall Q_{1}, Q_{2} \mathrm{~with~} \partial Q_{1} \cap \partial
Q_{2} \neq \emptyset \Big \}, \nonumber
\end{align}
where $\nabla^{i}$ has the usual meaning of $i^{\mathrm{th}}$-order partial
derivative.
The space $\mathcal{H}^m$ is equipped with the following semi-norms and norm
\begin{equation*}
\vert v \vert_{\mathcal{H}^i(\Omega)}^{2} = \sum_{Q \in \mathcal{Q}_h} \vert
v \vert_{H^{i} (Q)}^{2},~~
0 \le i \le m,\quad {\rm and}\quad
\Vert v \Vert_{\mathcal{H}^m(\Omega)}^{2} = \sum_{i=0}^{m} \vert v
\vert_{\mathcal{H}^i(\Omega)}^{2}.
\end{equation*}

On a regular mesh of size $h$, the condition number of the finite element
equations for a second-order elliptic boundary value problem can be obtained
using {\it{inverse estimates}} \cite{Axelsson-01, Braess-07, Ciarlet-78}.
Similar inverse estimates are of interest for the isogeometric framework using
NURBS basis functions.

To keep the article self-contained, we recall some results from \cite{
BVCHughesS-06, Schumaker-07}.
\begin{theorem}
\label{th:Inverse}
Let $\mathcal{S}_h$ be the spline space consisting of piecewise polynomials of
degree $p$ associated with uniform partitions.
Then there exists a constant $C=C(shape)$, such that for all $0 \leq l \leq
m$,
\begin{equation}
\label{eq:InvEst}
\|v\|_{\mathcal{H}^m(\Omega)} \leq C h^{l-m} \|v\|_{\mathcal{H}^l(\Omega)}, \quad \forall v \in \mathcal{S}_h.                                 
\end{equation}
\end{theorem}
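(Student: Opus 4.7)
The plan is a completely standard scaling argument on each element, combined with norm equivalence on the finite-dimensional space of polynomials on a reference cell, and then a summation over the mesh. The key observation is that the broken Sobolev norm $\|\cdot\|_{\mathcal{H}^m(\Omega)}$ defined in \eqref{eq:BSspace} decomposes additively over elements: $\|v\|_{\mathcal{H}^m(\Omega)}^2 = \sum_{Q\in\mathcal{Q}_h} \|v\|_{H^m(Q)}^2$. Hence it suffices to produce a local inverse estimate $\|v\|_{H^m(Q)} \le C\,h_Q^{l-m}\,\|v\|_{H^l(Q)}$ on each $Q$ and then sum, using quasi-uniformity to replace $h_Q$ by $h$.

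First I would fix a reference element $\hat Q = (0,1)^d$ and introduce the affine bijection $F_Q : \hat Q \to Q$, $F_Q(\hat x) = x_Q + h_Q \hat x$ (a diagonal scaling if the aspect ratio is nontrivial, which shape-regularity controls uniformly). Writing $\hat v = v \circ F_Q$, the chain rule and a change of variables produce the two-sided scaling identities
\begin{equation*}
|\hat v|_{H^i(\hat Q)} \;\sim\; h_Q^{\,i-d/2}\,|v|_{H^i(Q)}, \qquad 0 \le i \le m,
\end{equation*}
so that on the reference element all $\mathcal{O}(h)$ factors are exposed cleanly.

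Second, because $v|_Q$ is a tensor-product polynomial of coordinate degree at most $p$, the pulled-back function $\hat v$ lies in the finite-dimensional space $\mathcal{P}_p(\hat Q)$ on which the norms $\|\cdot\|_{H^l(\hat Q)}$ and $\|\cdot\|_{H^m(\hat Q)}$ are equivalent. That yields a reference inverse estimate
\begin{equation*}
\|\hat v\|_{H^m(\hat Q)} \;\le\; C(p,d)\,\|\hat v\|_{H^l(\hat Q)}, \qquad \forall \hat v \in \mathcal{P}_p(\hat Q),
\end{equation*}
with a constant independent of $h$ (we absorb the $p$-dependence into $C=C(\mathrm{shape})$ since $p$ is held fixed throughout the $h$-refinement analysis). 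Unwinding the scaling relations gives the local estimate $\|v\|_{H^m(Q)} \le C\,h_Q^{l-m}\,\|v\|_{H^l(Q)}$.

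Finally, quasi-uniformity of the family $\{\mathcal{Q}_h\}_h$ implies $h_Q \simeq h$ uniformly in $Q$, so I may replace $h_Q$ by $h$ at the cost of adjusting the shape constant. Squaring, summing over $Q \in \mathcal{Q}_h$, and invoking the additive decomposition of the broken norm reassembles the global inequality \eqref{eq:InvEst}. The main obstacle is not conceptual but bookkeeping: one must check that the inter-element continuity conditions built into $\mathcal{H}^m(\Omega)$ do not couple elements in a way that obstructs the element-wise argument, and they do not, since those conditions only restrict which $v$ lie in $\mathcal{S}_h$ but do not contribute additional terms to the norm itself.
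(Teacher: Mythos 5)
The paper does not actually prove Theorem~\ref{th:Inverse}: it cites \cite{BVCHughesS-06} for the particular case $m=2$, $l=1$ and asserts that the general case ``can be easily derived following the same approach.'' Your proposal supplies exactly that standard argument --- element-wise localization of the broken norm, affine pullback to a reference cell, norm equivalence on the finite-dimensional polynomial space, and reassembly under quasi-uniformity --- so in spirit you are doing what the paper delegates to the literature, and the skeleton is correct: the broken norm is additive over elements, $v|_Q$ is a (tensor-product) polynomial of degree $\le p$ on each knot span, and the inter-element continuity constraints in \eqref{eq:BSspace} only restrict membership in $\mathcal{S}_h$ without coupling the element contributions.

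The one step you should not wave through is ``unwinding the scaling relations.'' The full Sobolev norm is not homogeneous under the dilation $F_Q$: from your (correct) identities $|\hat v|_{H^i(\hat Q)} \sim h_Q^{\,i-d/2}|v|_{H^i(Q)}$ one gets $\|v\|_{H^m(Q)}^2 \sim \sum_{i=0}^{m} h_Q^{\,d-2i}|\hat v|_{H^i(\hat Q)}^2$ and $\|\hat v\|_{H^l(\hat Q)}^2 \le h_Q^{-d}\|v\|_{H^l(Q)}^2$ (for $h_Q\le 1$), so feeding the full-norm reference inequality $\|\hat v\|_{H^m(\hat Q)} \le C\|\hat v\|_{H^l(\hat Q)}$ into these bounds yields only $\|v\|_{H^m(Q)} \le C h_Q^{-m}\|v\|_{H^l(Q)}$, which is weaker than the claimed $h_Q^{\,l-m}$. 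The fix is to argue seminorm by seminorm: for $l \le i \le m$ use the reference inequality $|\hat v|_{H^i(\hat Q)} \le C |\hat v|_{H^l(\hat Q)}$, which holds on $\mathcal{P}_p(\hat Q)$ because the kernel $\mathcal{P}_{l-1}$ of the right-hand seminorm is annihilated by the left-hand one, so the estimate follows from finite-dimensionality on the quotient; this gives $|v|_{H^i(Q)} \le C h_Q^{\,l-i}|v|_{H^l(Q)} \le C h_Q^{\,l-m}\|v\|_{H^l(Q)}$. For $i < l$ the terms are handled trivially since $|v|_{H^i(Q)} \le \|v\|_{H^l(Q)}$ and $h^{\,l-m}$ is bounded below by a positive constant on a bounded domain. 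With that repair your argument is complete and matches the standard proof the paper implicitly relies on.
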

\noindent
The proof of the above theorem, for a particular case $m=2$ and $l=1$, is
given in \cite{BVCHughesS-06}.
More general inverse inequalities can be easily derived following the same
approach.
By taking $m=1$ and $l=0$, the following can be easily derived from
(\ref{eq:InvEst})
\begin{equation}
\label{eq:EstInv}
 a(v,v)=\int_{\Omega} |\nabla v|^2 \leq C h^{-2}\|v\|^2.
\end{equation}
Under suitable conditions the condition number related to elliptic problems in
finite element analysis scales as $h^{-2}$ \cite{Ern-91, Johnson-87,
Strang-73}.
We prove the similar result for the stiffness matrix arising in isogeometric
discretization.

We first prove
\begin{lemma}
\label{le:BilBouH}
There exist constants $C_1$ and $C_2$ independent of $h$ (but may depend on
$p$), such that for all $v = \displaystyle \sum_{i=1}^{n_h} v_i N_i \in
\mathcal{S}_h$,
\begin{equation}
\label{eq:BasisCondH}
C_1h^2\|\{v_i\}\|^2 \leq \Big\|\sum_{i=1}^{n_h} v_i N_i \Big\|^2\leq
C_2h^2\|\{v_i\}\|^2.
\end{equation}
\end{lemma}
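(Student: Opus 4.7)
The plan is to establish that the mass-type quadratic form $v \mapsto \|v\|_{L^2(\Omega)}^2$ is spectrally equivalent to $h^2\|\{v_i\}\|^2$ on the space of B-spline coefficients, with equivalence constants depending only on $p$ and on the shape-regularity of the mesh. This is exactly the content of (\ref{eq:BasisCondH}), and it amounts to saying that the Gram matrix $(\int_{\Omega} N_i N_j)$ is spectrally equivalent to $h^2 I$.

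My proposed route is to reduce the statement to the $h$-independent basis-stability bound already in hand. By Lemma~\ref{Lemma:SS99-d-dims} applied to the tensor-product B-spline basis $\{\widehat N_i\}$ on a reference patch $\widehat\Omega=(0,1)^2$ carrying the scaled knot vector, there exist constants $\widehat C_1,\widehat C_2$ depending only on $p$ (via $\kappa<(p2^p)^2$) such that
$$
\widehat C_1\,\|\{v_i\}\|^2 \;\le\; \Big\|\sum_i v_i \widehat N_i\Big\|_{L^2(\widehat\Omega)}^2 \;\le\; \widehat C_2\,\|\{v_i\}\|^2
$$
for every coefficient vector. The \emph{essential} feature of this inequality, coming from the de~Boor / Scherer--Shadrin line of results, is that it is independent of the underlying knot sequence, and therefore holds for arbitrarily fine knot refinements of the reference patch.

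The remaining step is a scaling argument. Under the piecewise-affine map $\widehat x \mapsto h\widehat x$ taking a reference element of unit diameter onto a physical element $Q\in\mathcal{Q}_h$ of diameter $h_Q$, the B-spline basis transforms by pullback while the $L^2$ norm picks up a Jacobian factor $|Q|\sim h_Q^2$. Summing the resulting element-wise identities over $\mathcal{Q}_h$ and invoking the quasi-uniformity $h_Q \sim h$ of the family $\{\mathcal{Q}_h\}_h$ yields (\ref{eq:BasisCondH}) with $C_1,C_2$ depending only on $p$ and on the shape-regularity constant.

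The chief technical obstacle is that each basis function $N_i$ has support overlapping up to $(p+1)^2$ neighbouring elements, so a naive element-by-element application of a local polynomial-basis stability estimate would either over-count cross terms (destroying the lower bound) or introduce a constant that grows with the number of elements (destroying $h$-independence). The knot-sequence-independent form of the Scherer--Shadrin bound, applied globally to the whole mesh \emph{before} the scaling step, is precisely what circumvents this difficulty; once it is in place, the change-of-variables is routine.
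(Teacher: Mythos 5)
Your overall strategy (reduce to knot-independent B-spline stability, then rescale) is in the right spirit, but the two pivotal steps both have genuine gaps. First, the unweighted reference-patch inequality
\begin{equation*}
\widehat C_1\,\|\{v_i\}\|^2 \;\le\; \Big\|\sum_i v_i \widehat N_i\Big\|_{L^2(\widehat\Omega)}^2 \;\le\; \widehat C_2\,\|\{v_i\}\|^2,
\end{equation*}
with $\widehat C_1,\widehat C_2$ depending only on $p$, is false on a fixed patch carrying a knot vector of spacing $h$: take $v_i\equiv 1$; by the partition of unity the middle term equals $|\widehat\Omega|=1$, while $\|\{v_i\}\|^2=n_h=\mathcal{O}(h^{-2})\to\infty$, so the lower constant must degenerate like $h^2$. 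The knot-sequence independence in the de~Boor/Scherer--Shadrin results refers to the condition number of the \emph{$L^s$-normalized} B-splines; equivalently, the two-sided $L^2$ inequality carries weights $(\xi_{i+p+1}-\xi_i)/(p+1)\sim h$ on each coefficient (hence $\sim h^2$ for the 2D tensor product). It is precisely these weights, which you have dropped, that produce the $h^2$ in \eqref{eq:BasisCondH}; once they are dropped, no subsequent scaling can restore $h$-uniformity. Second, the scaling step itself is ill-posed: the physical patch is fixed at $(0,1)^2$ and the map $\widehat x\mapsto h\widehat x$ does not take it to itself, so there is no global change of variables to sum; doing it element by element reintroduces exactly the overlap/cross-term difficulty you name, and your claim that applying the stability bound ``globally before scaling'' circumvents it is circular, because the global unweighted bound is the false statement above.

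For comparison, the paper avoids both issues: the upper bound is proved elementarily on each element using nonnegativity, the partition of unity, local support ($(p+1)^2$ active functions per element), and $|Q|\le h_Q^2$, then summed with bounded overlap; the lower bound uses the counting estimate $n_h=\mathcal{O}(h^{-2})$ together with the $L_\infty$-stability \eqref{eq:BasisCondP} to absorb the factor $h^2 n_h$ into a constant. Your route can be repaired by invoking the stability inequality in its correctly weighted form, in which case the factor $h^2\sim\|N_i\|_{L^2(\Omega)}^2$ appears directly and no scaling argument is needed at all.
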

\begin{proof}
We only consider the non-trivial case: there exists some $i$ for which $v_i
\neq 0$.
For any $Q \in \mathcal{Q}_h$, there are $(p+1)^{2}$ basis functions with
non-zero support.
Let $ \mathcal{I}_{h}^{Q} \equiv \{i^{Q}_{1}, i^{Q}_{2}, \ldots ,
i^{Q}_{p+1}\} \times \{j^{Q}_{1}, j^{Q}_{2}, \ldots , j^{Q}_{p+1}\} \subset
\{1, 2, \ldots, n_h\}$ denote the index set for the basis functions that have
non-zero support in $Q$.
Also, let $\bar{v}_q = {\displaystyle \max_{i \in \mathcal{I}_{h}^{Q} } \vert
v_i \vert}$ and $\bar{v} = {\displaystyle \max_{i= 1,2,\ldots,n_h} \vert v_i
\vert}$.
Now using positivity and partition of unity properties of basis functions, the
right hand side inequality can be proved as follows:
\begin{align*}
\| v \|^{2} = & \sum_{Q \in \mathcal{Q}_h} \int_{Q} v^{2}
= \sum_{Q \in \mathcal{Q}_h} \int_{Q}\Bigg(\sum_{i \in \mathcal{I}_{h}^{Q} }
v_i N_i \Bigg)^2
\leq \sum_{Q \in \mathcal{Q}_h} \int_{Q}\Bigg( \bar{v}_q \sum_{i \in
\mathcal{I}_{h}^{Q} } N_i \Bigg)^2 \\
\leq & \sum_{Q \in \mathcal{Q}_h} \int_{Q} \bar{v}_q^2
\leq \sum_{Q \in \mathcal{Q}_h} h_{Q}^{2} \bar{v}^2_q
\leq \sum_{Q \in \mathcal{Q}_h} h_{Q}^{2} \sum_{i \in \mathcal{I}_{h}^{Q}}
v_i^{2}\\
\leq & h^{2} \sum_{Q \in \mathcal{Q}_h} \sum_{i \in \mathcal{I}_{h}^{Q} }
v_i^{2}
\leq C_2 h^{2} \sum_{i=1}^{n_h} v_i^{2}= C_2h^2\|\{v_i\}\|^2.
\end{align*}
For the left hand side inequality,
\begin{align*}
h^2\|\{v_i\}\|^2 = & h^2 \sum_{i=1}^{n_h} v_i^2 \leq h^2 \sum_{i=1}^{n_h}
\bar{v}^2
= h^2 {n_h} \bar{v}^2 \leq h^2 \bigg( \frac{C}{h}\bigg)^2 \bar{v}^2 = C^2
\bar{v}^2 \\
= & C^2 \|\{v_i\}\|_{L_\infty}^2 \leq C^2 K_1^2 \|v\|_{L_\infty}^2
\text{ $\Big($using (\ref{eq:BasisCondP}), $ K_1^{-1}\|\{v_i\}\|_{L_\infty}
\leq \big\| \sum v_i N_i \big\|_{L_\infty}\Big)$ }\\
\leq & C^2 K_1^2 \|v\|^2.
\end{align*}
The result then follows by taking $\displaystyle C_1= \left(\displaystyle \frac{1}{C^2K_1^2}\right).$ \hfill
\end{proof}

We now turn to the problem of obtaining bounds on the extremal eigenvalues and
the condition number.
%ccd The main result concerning the condition number of the stiffness matrix is
\begin{theorem}
\label{th:ConBouH}
Let $A$ be the stiffness matrix $A=(a_{ij})$, where $a_{ij}=a(N_i,N_j)=
\displaystyle \int _{\Omega} \nabla N_i \cdot \nabla N_j$.
Then the upper bound on $\lambda_{\text{max}}$ and lower bound on
$\lambda_{\text{min}}$ are given by
\begin{center}
$\lambda_{\text{max}} \leq c_1$
\quad and \quad
$\lambda_{\text{min}} \geq c_2 h^2$,
\end{center}
where $c_1, c_2$ are constants independent of $h$. 
The bound on $\kappa (A)$ is given by
\begin{center}
$\kappa (A) \leq C h^{-2}$,
\end{center}
where $C$ is a constant independent of $h$.
\end{theorem}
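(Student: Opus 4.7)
The plan is to bound both extremal eigenvalues through the Rayleigh quotient, sandwiching the energy $a(v,v)$ between norms of the coefficient vector by combining the inverse estimate (\ref{eq:EstInv}), the norm equivalence of Lemma~\ref{le:BilBouH}, and a Poincar\'e--Friedrichs inequality on $\mathcal{S}$. Throughout, for any $v = \sum_{i=1}^{n_h} v_i N_i \in \mathcal{S}_h$ with coefficient vector $\mathbf{v}=(v_i)$, the key identity is $\mathbf{v}^{T} A \mathbf{v} = a(v,v) = \int_{\Omega}|\nabla v|^{2}$, so bounds on $\lambda_{\max}$ and $\lambda_{\min}$ translate directly into upper and lower bounds for $a(v,v)$ in terms of $\|\mathbf{v}\|^{2} = \|\{v_i\}\|^{2}$.

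For the upper bound on $\lambda_{\max}$, I would chain the inverse estimate \eqref{eq:EstInv}, which gives $a(v,v) \leq C h^{-2} \|v\|^{2}$, with the right-hand inequality of Lemma~\ref{le:BilBouH}, which gives $\|v\|^{2} \leq C_{2} h^{2} \|\{v_i\}\|^{2}$. The two powers of $h$ cancel, producing $\mathbf{v}^{T} A \mathbf{v} \leq C C_{2} \|\mathbf{v}\|^{2}$, and the Rayleigh quotient characterization of $\lambda_{\max}$ yields $\lambda_{\max} \leq c_{1}$ with $c_{1} = C C_{2}$, independent of $h$.

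For the lower bound on $\lambda_{\min}$, I would reverse the chain: the left-hand inequality of Lemma~\ref{le:BilBouH} reads $C_{1} h^{2} \|\{v_i\}\|^{2} \leq \|v\|^{2}$, and since $\mathcal{S}_h \subset \mathcal{S}$ consists of functions vanishing on $\partial\Omega$, the Poincar\'e--Friedrichs inequality gives $\|v\|^{2} \leq C_{P} a(v,v)$ with $C_{P}$ depending only on $\Omega$. Combining yields $a(v,v) \geq (C_{1}/C_{P}) h^{2} \|\mathbf{v}\|^{2}$, so $\lambda_{\min} \geq c_{2} h^{2}$ with $c_{2} = C_{1}/C_{P}$. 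The condition number bound then follows at once from $\kappa(A) = \lambda_{\max}/\lambda_{\min} \leq (c_{1}/c_{2}) h^{-2}$.

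The calculation itself is essentially bookkeeping of the two chains above, so there is no serious obstacle within this subsection. The subtle point to watch is that the constants $C$, $C_{1}$, $C_{2}$ inherited from (\ref{eq:EstInv}) and Lemma~\ref{le:BilBouH} are implicitly $p$-dependent; for the $h$-refinement statement this is harmless since $p$ is fixed, but the same chain cannot be reused verbatim in Section~\ref{Subsec:CondNum4StiffnessMatrix-p}, where that $p$-dependence will need to be tracked explicitly. A minor technical check is that the quasi-uniformity of $\{\mathcal{Q}_h\}$ is used in the left-hand part of Lemma~\ref{le:BilBouH} through $n_h \leq C/h^{d}$, which is already embedded in that lemma's proof, so Theorem~\ref{th:ConBouH} itself invokes it only indirectly.
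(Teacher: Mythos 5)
Your proposal is correct and follows essentially the same route as the paper: the upper bound chains the inverse estimate \eqref{eq:EstInv} with the right-hand inequality of Lemma~\ref{le:BilBouH}, and the lower bound chains coercivity of $a(\cdot,\cdot)$ on $\mathcal{S}$ (which is exactly the Poincar\'e--Friedrichs inequality you invoke) with the left-hand inequality of that lemma, before taking the Rayleigh-quotient characterizations. The paper merely phrases the Poincar\'e step as ``coercivity of the bilinear form,'' so the two arguments are the same in substance.
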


\begin{proof}
Let $v=\displaystyle \sum_{i=1}^{n_h} v_i N_i$.  Then $a(v,v)= \{v_i\}\cdot
A\{v_i\}$, where $\{v_i\}=\{v_1,v_2,\ldots,v_{n_h}\}$.
Using the inverse estimate (\ref{eq:EstInv}),
\begin{align*}
\frac{\{v_i\}\cdot A\{v_i\}}{\|\{v_i\}\|^2}= &
\frac{a(v,v)}{\|\{v_i\}\|^2}\leq \frac{C h^{-2} \|v\|^2}{\|\{v_i\}\|^2}.
\end{align*}
Using (\ref{eq:BasisCondH}),
\begin{align*}
&\displaystyle \frac{C h^{-2} \|v\|^2}{\|\{v_i\}\|^2} \leq \frac{C h^{-2} C_2
h^2\|\{v_i\}\|^2 }{\|\{v_i\}\|^2}=CC_2=c_1.
\end{align*}
Hence,
\begin{equation}
\label{eq:LambdaMaxBound}
\displaystyle \lambda_{\text{max}} =
\sup_{\substack{v\neq0}} \frac{\{v_i\}\cdot
A\{v_i\}}{\|\{v_i\}\|^2}\leq c_1.
\end{equation}
On the other hand, for the bounds on $\lambda_{\text{min}}$, by using
coercivity of bilinear form $a(v,v)$,
\begin{align*}
\frac{\{v_i\}\cdot A\{v_i\}}{\|\{v_i\}\|^2}= &\frac{a(v,v)}{\|\{v_i\}\|^2}\geq
\frac{\alpha \|v\|_{H^1}^2}{\|\{v_i\}\|^2}
\geq \frac{\alpha \|v\|^2}{\|\{v_i\}\|^2}.
\end{align*}
Using (\ref{eq:BasisCondH}) again,
\begin{align*}
\displaystyle \frac{\alpha \|v\|^2}{\|\{v_i\}\|^2} \ge \frac{\alpha_1 C_1
h^2\|\{v_i\}\|^2 }{\|\{v_i\}\|^2}= & \alpha_1C_1h^2=c_2h^2.
\end{align*}
Hence,
\begin{equation}
\label{eq:LambdaMinBound}
\displaystyle \lambda_{\text{min}} =
\inf_{\substack{v\neq0}} \frac{\{v_i\}\cdot A\{v_i\}}{\|\{v_i\}\|^2}\geq c_2
h^2.
\end{equation}
The condition number of the stiffness matrix is given by
\begin{equation*}
\kappa (A) =\frac{\lambda_{\text{max}}}{\lambda_{\text{min}}},
\text{where}~\lambda_{\text{max}}= \max_{\substack{v\neq0}} \frac{\{v_i\}\cdot
A\{v_i\}}{\|\{v_i\}\|^2},~
\text{and}~
\lambda_{\text{min}}= \min_{\substack{v\neq0}} \frac{\{v_i\}\cdot
A\{v_i\}}{\|\{v_i\}\|^2}.
\end{equation*}
From (\ref{eq:LambdaMaxBound}) and (\ref{eq:LambdaMinBound}),
\begin{equation}
\kappa (A) \leq C h^{-2}.
\end{equation}
\hfill\end{proof}

% ------------------

\subsubsection{$p$-refinement}
\label{Subsec:CondNum4StiffnessMatrix-p}

In this section for $p$-refinement, we prove upper and lower bounds for the
maximum eigenvalue, lower bounds for the minimum eigenvalue, and upper bounds
for the condition number.

Let $\mathcal{S}_p$ be the tensor product space of spline functions of
degree $p$.
%ccd We set next some basic technical lemmas which will be needed later on.

The following lemma is well known generalization of a theorem of Markov due to
Hill, Szechuan and Tamarkin \cite{ Bellman-44, Olsen-95}.
\begin{lemma}[Schmidt's inequality]
There exists a constant $C$ independent of $p$ such that for any polynomial
$f(x)$ of degree $p$,
\begin{equation}
\label{eq:SchIne}
\int_{-1}^{1}(f'(x))^2 dx \leq C p^4 \int_{-1}^{1}(f(x))^2 dx.
\end{equation}
\end{lemma}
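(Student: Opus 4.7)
My plan is to prove Schmidt's inequality via expansion in the Legendre polynomial basis, which is the natural $L^2(-1,1)$-orthogonal system, combined with the classical derivative identity for Legendre polynomials. This is cleaner than a detour through $L^\infty$-Markov inequalities because it works intrinsically at the $L^2$ level and automatically produces the sharp $p^4$ factor.

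Let $\{L_k\}_{k \geq 0}$ denote the Legendre polynomials on $[-1,1]$ with $\|L_k\|_{L^2}^2 = 2/(2k+1)$. Since $f$ has degree $p$, I would write $f(x) = \sum_{k=0}^p a_k L_k(x)$, so orthogonality gives
\begin{equation*}
\int_{-1}^{1} f(x)^2\, dx = \sum_{k=0}^{p} \frac{2\, a_k^2}{2k+1}.
\end{equation*}
The next step uses the standard identity $L_k'(x) = \sum_{j : 0 \le j < k,\ k-j \text{ odd}} (2j+1) L_j(x)$. After substituting and interchanging the order of summation, this gives $f'(x) = \sum_{j=0}^{p-1} b_j L_j(x)$ with
\begin{equation*}
b_j = (2j+1)\!\!\sum_{\substack{k=j+1 \\ k-j \text{ odd}}}^{p}\!\! a_k,
\end{equation*}
whence orthogonality again yields $\int_{-1}^{1}(f'(x))^2\,dx = \sum_{j=0}^{p-1} 2(2j+1)\bigl(\sum_{k} a_k\bigr)^2$, the inner sum being over the admissible $k$.

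To turn this into a bound in terms of $\int f^2$, I would apply Cauchy--Schwarz with weights matched to the Legendre normalization:
\begin{equation*}
\Bigl(\sum_{k > j} a_k\Bigr)^2 \le \Bigl(\sum_{k > j} \frac{2\, a_k^2}{2k+1}\Bigr)\Bigl(\sum_{k > j} \frac{2k+1}{2}\Bigr) \le \|f\|_{L^2}^2 \cdot C p^2.
\end{equation*}
Substituting this into the expression for $\|f'\|_{L^2}^2$ and using $\sum_{j=0}^{p-1}(2j+1) = p^2$ produces $\|f'\|_{L^2}^2 \le C\, p^2 \cdot p^2 \|f\|_{L^2}^2 = C p^4 \|f\|_{L^2}^2$, which is exactly \eqref{eq:SchIne}.

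The main obstacle is the bookkeeping in the Cauchy--Schwarz step: a naive splitting of $b_j$ into a sum of $O(p)$ terms of $|a_k|$ and then applying Cauchy--Schwarz without the $(2k+1)$-weights would give $p^5$ or $p^6$, not $p^4$. The sharp exponent hinges on the duality between the weight $2/(2k+1)$ appearing in the $L^2$-norm of $f$ and the prefactor $(2j+1)$ in $b_j$, and on the fact that, thanks to the parity constraint in the Legendre derivative identity, each $a_k$ contributes to at most $\lceil k/2 \rceil$ of the $b_j$. Once the weights are lined up correctly, the two factors of $p^2$ (one from $\sum(2k+1)$, one from $\sum(2j+1)$) combine to give the advertised $p^4$.
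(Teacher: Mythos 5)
Your proof is correct, but note that the paper does not actually prove this lemma: it is quoted as a known result (E.\ Schmidt's inequality) with citations to Bellman's 1944 note and to Olsen--Douglas, so there is no in-paper argument to compare against. Your Legendre-expansion route is a legitimate self-contained proof: the identity $L_k' = \sum_{j<k,\ k-j\ \mathrm{odd}}(2j+1)L_j$ is correct, the resulting formula $b_j=(2j+1)\sum_{k>j,\ k-j\ \mathrm{odd}}a_k$ is correct, and the weighted Cauchy--Schwarz step
\begin{equation*}
\Bigl(\sum_{k>j}a_k\Bigr)^2\le\Bigl(\sum_{k>j}\tfrac{2a_k^2}{2k+1}\Bigr)\Bigl(\sum_{k>j}\tfrac{2k+1}{2}\Bigr)\le \|f\|_{L^2}^2\cdot\tfrac{(p+1)^2}{2}
\end{equation*}
combined with $\sum_{j=0}^{p-1}(2j+1)=p^2$ does yield $\|f'\|_{L^2}^2\le Cp^4\|f\|_{L^2}^2$ with an explicit $C$. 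Two small remarks. First, dropping the parity constraint inside the Cauchy--Schwarz factors is harmless (it only enlarges the bound), but then your closing claim that the parity constraint is what rescues the exponent is not accurate: the $p^4$ comes entirely from pairing the weight $2/(2k+1)$ in $\|f\|_{L^2}^2$ against the prefactor $(2j+1)$ in $b_j$, and the argument goes through verbatim without ever invoking parity. Second, this approach cannot by itself establish the paper's appended "Note" that the exponent $4$ is optimal; for that one needs a lower-bound example (e.g.\ $f=L_p$ itself, for which $\|L_p'\|_{L^2}^2/\|L_p\|_{L^2}^2\sim cp^4$ already suffices, and would be a natural one-line addition to your write-up).
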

\noindent
Note: No such constant $C$ exists so that (\ref{eq:SchIne}) holds for all
$f(x)$ with the exponent smaller than 4.

Let $I=(-1,1)$.
Using (\ref{eq:SchIne}),
\begin{equation}
\label{eq:SchIneN}
\int_{I} \left(\frac{dN_p(\xi)}{d\xi}\right)^2 d\xi \leq C p^4
\int_{I}(N_p(\xi))^2 d\xi.
\end{equation}
Using (\ref{eq:SchIneN}),
\begin{equation}
\label{SchIneB}
\begin{split}
\int_{\Omega} \nabla N_p(\xi,\eta) \cdot \nabla N_p(\xi,\eta) d\xi d\eta & =
\displaystyle \int_I \int_I \left[ \left(\frac {\partial
N_p(\xi,\eta)}{\partial \xi}\right)^2+
\left(\frac {\partial N_p(\xi,\eta)}{\partial \eta}\right)^2 \right] d\xi
d\eta \\
& \leq \displaystyle C p^4 \int_{I\times I} (N_p(\xi,\eta))^2 d\xi d\eta.
\end{split}
\end{equation}
Moreover, the following estimate directly follows from Schmidt's inequality
and (\ref{SchIneB}):
\begin{equation}
\label{eq:EstInvP}
a(v,v)=\int_{\Omega} |\nabla v|^2 \leq C p^4\|v\|^2.
\end{equation}
We now have a similar result like Lemma \ref{le:BilBouH} for the
$p$-refinement.
\begin{lemma}
\label{le:BilBouP}
There exist constants $C_1$ and $C_2$ independent of $p$ such that for
all\\ $\displaystyle v =\sum_{i=1}^{n_p} v_i N_i \in \mathcal{S}_p$,
\begin{equation}
\label{eq:BasConP}
\displaystyle\frac {C_1}{(p^24^p)^2}\|\{v_i\}\|^2 \leq \displaystyle
\Big\|\sum_{i=1}^{n_p} v_i N_i \Big\|^2\leq C_2\|\{v_i\}\|^2.
\end{equation}
\end{lemma}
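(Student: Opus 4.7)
The plan is to adapt the structure of Lemma~\ref{le:BilBouH} to the $p$-refinement regime, replacing the mesh-based inverse argument by Lemma~\ref{Lemma:SS99-d-dims}, which carries all of the $p$-dependence. As before, I would prove the upper and lower bounds separately: the upper bound by the elementary partition-of-unity argument, and the lower bound by invoking the de~Boor--Scherer--Shadrin type condition number estimate for the tensor-product B-spline basis.

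For the right-hand inequality of \eqref{eq:BasConP}, I would simply exploit nonnegativity and partition of unity of the B-spline basis. Pointwise, for every $\xi\in\Omega$,
\begin{equation*}
|v(\xi)| = \Big|\sum_{i=1}^{n_p} v_i N_i(\xi)\Big|
       \leq \Big(\max_i |v_i|\Big)\sum_i N_i(\xi)
       \leq \max_i |v_i| \leq \|\{v_i\}\|,
\end{equation*}
and integrating over the fixed domain $\Omega$ yields $\|v\|^2 \leq |\Omega|\,\|\{v_i\}\|^2$, so one may take $C_2=|\Omega|$, a constant independent of $p$. Note the contrast with Lemma~\ref{le:BilBouH}: there one extracted a factor $h^2$ by bounding by the element measure, whereas in $p$-refinement $h$ is held fixed and the simpler bound is sufficient.

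For the left-hand inequality I would appeal directly to Lemma~\ref{Lemma:SS99-d-dims}, which states that in dimension $d=2$ the $L^2$-condition number of the tensor-product B-spline basis satisfies $\kappa < (p2^p)^2 = p^2 4^p$. By the definition in \eqref{eq:BasisCondP}, there exist positive constants $K_1,K_2$ with $K_1K_2 = \kappa$ such that
\begin{equation*}
K_1^{-1}\|\{v_i\}\| \;\leq\; \|v\| \;\leq\; K_2\|\{v_i\}\|.
\end{equation*}
Because we have already proved the right inequality with a $p$-independent constant, we may take $K_2$ to be $p$-independent, whence $K_1 \leq C\,p^2 4^p$. Squaring the left inequality then delivers
\begin{equation*}
\|v\|^2 \;\geq\; K_1^{-2}\|\{v_i\}\|^2 \;\geq\; \frac{C_1}{(p^2 4^p)^2}\|\{v_i\}\|^2,
\end{equation*}
which is exactly the claimed lower bound.

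The main obstacle is bookkeeping rather than analysis: one must be careful that the vector norm $\|\{v_i\}\|$ appearing in the Scherer--Shadrin estimate (traditionally formulated with a mesh-weighted $\ell^s$ norm) agrees with the plain Euclidean norm used throughout this paper up to factors that depend only on $h$ and $|\Omega|$ and not on $p$. Since $h$ is held fixed in the $p$-refinement regime, these factors are absorbed into $C_1$ and $C_2$, and the extension from the one-dimensional statement \eqref{eq:kappas} to the tensor-product case is exactly the content of Lemma~\ref{Lemma:SS99-d-dims}; no further tensor-product manipulations are needed.
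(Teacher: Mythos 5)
Your proposal follows essentially the same route as the paper's proof: the upper bound from nonnegativity and the partition of unity (the paper states this in one line; your pointwise bound followed by integration over the fixed domain is exactly the intended argument), and the lower bound from the tensor-product stability estimate of Lemma~\ref{Lemma:SS99-d-dims}, which the paper records in \eqref{eq:BSplineStab} as $\Vert v\Vert \le \Vert\{v_i\}\Vert \le \gamma\,\Vert v\Vert$ with $\gamma = p^{2}4^{p}$. The one step you should repair is the inference ``$K_2$ may be taken $p$-independent, whence $K_1 \le C\,p^{2}4^{p}$'': from $\kappa = K_1K_2 < p^{2}4^{p}$, an \emph{upper} bound on $K_2$ yields a \emph{lower} bound on $K_1 = \kappa/K_2$, not an upper bound, so as written this is a non sequitur; you would need $K_2$ bounded away from zero. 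The correct reading --- and the one the paper implicitly uses --- is that the de~Boor/Scherer--Shadrin condition number is normalized so that the companion constant satisfies $K_2 \le 1$ (a consequence of nonnegativity and the partition of unity, i.e.\ exactly the right-hand inequality you already proved), so the bound $\kappa < (p2^p)^d$ is in effect a direct bound on $K_1$ itself. With that observation your argument, including the norm-weighting bookkeeping you flag at the end (a point the paper's own proof omits entirely), goes through.
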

\begin{proof}
From the stability of B-Splines there exists a constant $\gamma$ that
depends on the degree $p$ such that
\begin{equation}
\label{eq:BSplineStab}
\displaystyle \Big\| \sum_{i=1}^{n_p} v_iN_i\Big\| \leq \|\{v_i\}\| \leq
\gamma \Big\| \sum_{i=1}^{n_p} v_iN_i\Big\|,
\end{equation}
From (\ref{eq:BspKapD}), $\gamma = p^24^p$.
In the estimate (\ref{eq:BasConP}), the right hand side inequality follows
easily from nonnegativity and the partition of unity properties of basis
functions.
The left hand side inequality follows from (\ref{eq:BSplineStab}).
\hfill\end{proof}

For the $p$-refinement of isogeometric discretization, the analog to Theorem
\ref{th:ConBouH} is
\begin{theorem}
\label{th:ConBouP}
Let $\{N_i\}$ be a set of basis functions of $\mathcal{S}_p$ on a unit square.
Then
\begin{center}
$\kappa(A)\leq C p^816^p$.
\end{center}
\end{theorem}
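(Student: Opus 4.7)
The plan is to mirror the structure of the proof of Theorem~\ref{th:ConBouH}, but now treating $p$ as the refinement parameter and relying on the two $p$-dependent ingredients already established: the Schmidt-type inverse estimate \eqref{eq:EstInvP} and the basis-stability bounds \eqref{eq:BasConP}. Concretely, for any $0 \neq v = \sum_{i=1}^{n_p} v_i N_i \in \mathcal{S}_p$, we will sandwich the Rayleigh quotient $\{v_i\}\cdot A\{v_i\}/\|\{v_i\}\|^2 = a(v,v)/\|\{v_i\}\|^2$ between $\lambda_{\min}$ and $\lambda_{\max}$ and bound each side in terms of $p$.

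For the upper bound on $\lambda_{\max}$, I would chain the inverse estimate \eqref{eq:EstInvP} with the right-hand inequality of \eqref{eq:BasConP}:
\begin{equation*}
\frac{a(v,v)}{\|\{v_i\}\|^2} \;\leq\; \frac{Cp^{4}\|v\|^{2}}{\|\{v_i\}\|^{2}} \;\leq\; \frac{Cp^{4}\,C_{2}\|\{v_i\}\|^{2}}{\|\{v_i\}\|^{2}} \;=\; C'\,p^{4},
\end{equation*}
yielding $\lambda_{\max} \leq C' p^{4}$. Note that on the right inequality of \eqref{eq:BasConP} no exponential factor appears, so the upper eigenvalue bound is only polynomial in $p$.

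For the lower bound on $\lambda_{\min}$, I would combine coercivity of $a(\cdot,\cdot)$ on $\mathcal{S}\subset H^{1}_{0}$ with the Poincar\'e inequality to get $a(v,v) \geq \alpha \|v\|^{2}$ for some $\alpha>0$ independent of $p$, and then invoke the left inequality of \eqref{eq:BasConP}:
\begin{equation*}
\frac{a(v,v)}{\|\{v_i\}\|^{2}} \;\geq\; \frac{\alpha\,\|v\|^{2}}{\|\{v_i\}\|^{2}} \;\geq\; \frac{\alpha\,C_{1}}{(p^{2}4^{p})^{2}} \;=\; \frac{c}{p^{4}\,16^{p}},
\end{equation*}
so $\lambda_{\min} \geq c\,p^{-4}\,16^{-p}$. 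Taking the ratio then gives
\begin{equation*}
\kappa(A) \;=\; \frac{\lambda_{\max}}{\lambda_{\min}} \;\leq\; \frac{C'p^{4}}{c\,p^{-4}16^{-p}} \;=\; C\,p^{8}\,16^{p},
\end{equation*}
which is exactly the claimed bound.

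The two routine steps (Schmidt-based $\lambda_{\max}$, coercivity-based $\lambda_{\min}$) are each essentially one line; the real substance of the theorem is already packed into Lemma~\ref{le:BilBouP} and the Scherer--Shadrin bound \eqref{eq:BspKapD} that underlies it. The main obstacle is therefore not in the proof itself but in confirming that one truly cannot avoid the $16^{p} = (4^{p})^{2}$ factor: it enters quadratically because the left-hand inequality of \eqref{eq:BasConP} involves $\gamma^{2}$, where $\gamma = p^{2}4^{p}$ is the tensor-product B-spline condition number from Lemma~\ref{Lemma:SS99-d-dims}. This should be flagged, since any future improvement of de Boor's conjecture from $p\,2^{p}$ toward $2^{p}$ would immediately sharpen the condition-number estimate to the order $p^{4}\,16^{p}$, and an improvement to a purely exponential $2^{p}$ bound would give $\kappa(A) \lesssim p^{4}\,16^{p}$ with no extraneous polynomial factor beyond the Schmidt contribution.
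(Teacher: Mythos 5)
Your proposal is correct and follows essentially the same route as the paper's own proof: chain the Schmidt-based inverse estimate \eqref{eq:EstInvP} with the right-hand inequality of \eqref{eq:BasConP} to get $\lambda_{\max}\leq Cp^{4}$, then use coercivity together with the left-hand inequality of \eqref{eq:BasConP} to get $\lambda_{\min}\geq C/(p^{4}16^{p})$, and take the ratio. Your closing observations about where the $16^{p}$ factor enters and how de Boor's conjecture would sharpen it match the paper's own Remark~\ref{re:dbc}.
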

\begin{proof}
We prove this theorem following the same approach as for the $h$-refinement
estimates.
Let $v= \displaystyle \sum_{i=1}^{n_p} v_i N_i$, where
$\{v_i\}=\{v_1,v_2,\ldots,v_{n_p}\}$.
Now using (\ref{eq:EstInvP}) and (\ref{eq:BasConP}),
\begin{equation*}
\begin{split}
\frac{\{v_i\}\cdot A\{v_i\}}{\|\{v_i\}\|^2}= \frac{a(v,v)}{\|\{v_i\}\|^2}\leq
\frac{C p^4 \|v\|^2}{\|\{v_i\}\|^2}
\leq \frac{C p^4 C_2\|\{v_i\}\|^2 }{\|\{v_i\}\|^2}
= C C_2 p^4 = C p^4.
\end{split}
\end{equation*}
Hence,
\begin{equation}
\label{eq:LambdaMaxBoundP}
\lambda_{\text{max}}= \max_{\substack{v\neq0}} \frac{\{v_i\}\cdot
A\{v_i\}}{\|\{v_i\}\|^2} \leq C p^4.
\end{equation}
To prove the lower bound for $\lambda_{\text{min}}$ we use (\ref{eq:BasConP})
and coercivity of bilinear form,
\begin{equation*}
\begin{split}
\frac{\{v_i\}\cdot A\{v_i\}}{\|\{v_i\}\|^2}&= \frac{a(v,v)}{\|\{v_i\}\|^2}\geq
\frac{\alpha\|v\|_{H^1}^2}{\|\{v_i\}\|^2} \ge
\frac{\alpha\|v\|^2}{\|\{v_i\}\|^2}\\
&\geq  \frac{\alpha \displaystyle \frac {C_1}{(p^24^p)^2}\|\{v_i\}\|^2}{ \|\{v_i\}\|^2}
= {\frac {\alpha C_1}{(p^24^p)^2}}={\frac {C}{(p^416^p)}}.
\end{split}
\end{equation*}
Hence,
\begin{equation}
\label{eq:LambdaMinBoundP}
\lambda_{\text{min}}= \min_{\substack{v\neq0}} \frac{\{v_i\}\cdot
A\{v_i\}}{\|\{v_i\}\|^2} \geq \frac {C}{(p^416^p)}.
\end{equation}
From (\ref{eq:LambdaMaxBoundP}) and (\ref{eq:LambdaMinBoundP}),
\begin{align*}
\kappa (A) =\frac{\lambda_{\text{max}}}{\lambda_{\text{min}}} \leq
\frac{Cp^4}{{\left( \displaystyle\frac {C}{(p^416^p)}\right)}}\leq C(p^816^p).
\end{align*}
\hfill\end{proof}

\begin{remark}
Theorem~\ref{th:ConBouP} can be easily generalized for higher dimensions.
The bound for the condition number of the stiffness matrix for a
$d$-dimensional problem is given by $(p^{4+2d}4^{pd})$.
\end{remark}

While we proved an upper bound on the maximum eigenvalue of the stiffness
matrix using B-Spline basis functions, Theorem~\ref{th:ConBouP} is independent
of the choice of the basis functions (it holds for all kind of basis
functions, not just spline functions).
From numerical experiments using B-Spline basis functions (see Table
\ref{tab:maxeig_p_h}), we observe that $\lambda_{\text{max}}$ depends linearly on the polynomial degree $p$, which motivates further investigations.

\begin{table}[!b]
\caption{ Maximum eigenvalue of the stiffness matrix $A$}
\label{tab:maxeig_p_h}
\begin{center}
\begin{tabular}{|c|c|c|c|c|c|c|c|c|}\hline
\backslashbox{$p$}{$h^{-1}$} & 1 & 2 & 4 & 8 & 16 & 32 & 64 & 128 \\ \hline
2 & 0.36 & 1.42  & 1.42 & 1.49 & 1.50 & 1.50 & 1.50 & 1.50 \\\hline
3 & 0.45 & 1.04 & 1.37 & 1.52 & 1.56 & 1.57 & 1.57 & 1.57 \\\hline
4 & 0.41 &  0.94 &  1.33 &  1.72 &  1.81 &  1.83 & 1.84 & 1.84 \\\hline
5 & 0.35 &  0.88 &  1.32 &  1.93 &  2.10 &  2.14 & 2.14 & 2.14  \\\hline
6 &  0.34 &  0.85 &  1.32 & 2.12 &  2.40 &  2.46 &  2.47 &  2.47\\\hline
7  & 0.33 & 0.84 &  1.32 & 2.26 & 2.70 &  2.78 & 2.80 & 2.80\\\hline
8 &  0.32 & 0.83 &  1.33 & 2.36 & 2.99 &  3.11 & 3.13 & 3.14 \\\hline
9 & 0.31 &  0.82 & 1.33 & 2.43 &  3.29 & 3.44 &  3.47 & 3.47 \\\hline
10 & 0.31 &  0.82 & 1.34 &  2.47 &  3.56 &  3.77 &  3.80 & 3.81 \\\hline
20 & 0.29 &  0.78 & 1.36 & 2.65 & 5.02 & 6.95 & 7.20 & 7.23 \\\hline
30 & 0.29 &  0.78 & 1.36 & 2.69 & 5.28 & 9.38 & 10.55 & 10.66\\\hline
\end{tabular}
\end{center}
\end{table}

The lower bound on the minimum eigenvalue depends on the stability of the
B-Spline basis functions, which cannot be improved further (especially beyond
the de Boor's conjecture).
On the other hand, the upper bound on the maximum eigenvalue directly depends
on the upper bound of the bilinear form $a(v,v)$.
We can improve the bound for $a(v,v)$ given in (\ref{eq:EstInvP}).
%From the derivative of a B-Spline basis function given in
%(\ref{eq:BsplineDer}) we obtain a new upper bound on the maximum
%eigenvalue of the element stiffness matrix, which is $\lambda_{\text{max}} \leq Cp^2$
%\cite{Gahalaut-14}.
%However, {\color{red}this bound is still not linearly dependent of $p$.}
In the following theorem we improve this bound and provide our main
result.
\begin{theorem}
\label{Thm:StippImproved-p}
For the two dimensional problem the improved upper bound for the condition
number of the stiffness matrix $A$ is given by
\begin{equation}
 \kappa(A)\leq  C p^2(p^24^p)^2= C p^616^p.
\end{equation}
The bound for a $d$-dimensional problem is given by
\begin{equation}
\kappa(A)\leq C p^{2d+2}4^{pd}.
\end{equation}
\end{theorem}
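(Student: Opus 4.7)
The plan is to improve the spectral estimate from Theorem~\ref{th:ConBouP} by saving one factor of $p^2$ in the upper bound on $\lambda_{\max}$. This single improvement then propagates directly to $\kappa(A)$, yielding $C p^6 16^p$ in two dimensions and $C p^{2d+2} 4^{pd}$ in $d$ dimensions.

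First, I would sharpen the estimate (\ref{eq:EstInvP}) to the form
\begin{equation*}
a(v,v) = \int_{\Omega} |\nabla v|^2 \le C p^2 \|v\|^2, \qquad \forall v \in \mathcal{S}_p.
\end{equation*}
The derivation that led to (\ref{eq:EstInvP}) applied Schmidt's inequality (\ref{eq:SchIne}) for polynomials of degree $p$, which costs a full $p^4$ factor and is sharp for generic polynomials (e.g., Chebyshev). To save a factor of $p^2$ one must instead use the structure of the smooth B-spline space $\mathcal{S}_p$. The plan is to combine the derivative recurrence (\ref{eq:BsplineDer}), which expresses $(N_i^p)'$ as a difference of B-splines of degree $p-1$ with coefficients governed by the knot spacing rather than by the worst-case Markov factor, with the $L^2$-stability of the degree-$(p-1)$ B-spline basis. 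Direction-by-direction this yields a spline-tailored inverse inequality $\|\partial_\xi v\|_{L^2}\le Cp \|v\|_{L^2}$, which upon squaring and summing over the coordinate directions gives the desired $Cp^2$ factor.

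Second, once the sharpened bound $a(v,v)\le Cp^2\|v\|^2$ is in hand, the Rayleigh quotient argument of Theorem~\ref{th:ConBouP} carries over verbatim. The lower bound (\ref{eq:LambdaMinBoundP}) on $\lambda_{\min}$ is unaffected because it uses only the coercivity of $a(\cdot,\cdot)$ together with (\ref{eq:BasConP}); thus $\lambda_{\min}\ge C/(p^4 16^p)$ remains valid. The new upper bound gives $\lambda_{\max}\le Cp^2$, and dividing yields $\kappa(A)\le Cp^2\cdot p^4 16^p = Cp^6 16^p$ in 2D. The $d$-dimensional extension is immediate: writing $|\nabla v|^2=\sum_{k=1}^d (\partial_k v)^2$ and applying the sharpened directionwise inverse inequality absorbs the factor $d$ into $C$, so $\lambda_{\max}\le Cp^2$ in any dimension. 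Using the $d$-dimensional tensor product stability (\ref{eq:BspKapD}), the analogue of (\ref{eq:BasConP}) gives $\lambda_{\min}\ge C/(p^{2d}4^{pd})$, and dividing produces $\kappa(A)\le Cp^{2d+2}4^{pd}$.

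The main obstacle is justifying the sharpened estimate $a(v,v)\le Cp^2\|v\|^2$ rigorously. Since Schmidt's inequality is tight for generic polynomials of degree $p$ on a fixed domain, any proof that treats $v\in\mathcal{S}_p$ merely as a polynomial cannot break the $p^4$ barrier. The saving must therefore come entirely from the global $C^{p-1}$ smoothness of $v$ together with the favorable B-spline derivative formula (\ref{eq:BsplineDer}); the delicate point is controlling the propagation of the $L^2$-stability constants of the degree-$(p-1)$ B-splines so that the resulting estimate is cleanly bounded by $Cp^2$, independently of the particular knot configuration underlying the maximum-regularity space.
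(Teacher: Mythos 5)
Your overall skeleton matches the target numbers ($\lambda_{\max}\le Cp^2$, $\lambda_{\min}\ge C/(p^416^p)$, quotient $Cp^616^p$), but the step on which everything hinges --- the sharpened estimate $a(v,v)\le Cp^2\|v\|^2$ for all $v\in\mathcal{S}_p$ --- is not proved, and as stated it is actually false in the regime the theorem must cover. On the coarsest mesh ($h=1$, a single knot span, which the paper explicitly treats and on which the element matrices are analyzed), $\mathcal{S}_p$ contains the \emph{entire} tensor-product polynomial space of degree $p$, since there are no interior knots and hence no smoothness constraints. Schmidt's inequality (\ref{eq:SchIne}) is sharp there (the paper's own note says the exponent $4$ cannot be lowered), so $\sup_v a(v,v)/\|v\|^2\sim p^4$ and no inverse inequality with factor $p^2$ can hold. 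Your proposed repair route also does not close: writing $v'=p\sum_j\frac{v_j-v_{j-1}}{\xi_{j+p}-\xi_j}N_j^{p-1}$ via (\ref{eq:BsplineDer}) and invoking $L^2$-stability forces you to pass from $\|v\|$ to the coefficient vector $\{v_i\}$ and back, and each such passage costs the B-spline basis condition number, which by (\ref{eq:kappas}) is only known to be $O(p2^p)$; chaining these reintroduces exponential factors rather than yielding a clean $Cp^2$. You flag this yourself as ``the main obstacle,'' but it is precisely the content of the theorem, so the proposal is a plan with the hard part missing rather than a proof.

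The paper avoids the function-space inverse inequality entirely and works at the matrix level, which is exactly how it sidesteps the sharpness of Schmidt's inequality: the extremal polynomials for (\ref{eq:SchIne}) have exponentially large Bernstein coefficients, so the Rayleigh quotient with respect to $\|\{v_i\}\|^2$ is much better behaved than $a(v,v)/\|v\|^2$. Concretely, Lemmas~\ref{le:UBDiagA} and~\ref{Lemma:BoundEVmax-l1-p} compute the entries $a(N^{p,p}_{k,l},N^{p,p}_{i,j})$ of the element stiffness matrix explicitly (Beta-function integrals of Bernstein polynomials, with an induction on $p$ for the resulting factorial ratios) and show that the absolute row sums are bounded by a constant independent of $p$; Lemma~\ref{le:UBlambdamax} then gives $\lambda_{\max}(A^e)\le C$ via $\|A^e\|_2\le\sqrt{\|A^e\|_1\|A^e\|_\infty}=\|A^e\|_1$, and Lemma~\ref{le:Glambdamax} obtains $\lambda_{\max}(A)\le C(p+1)^2$ from the fact that at most $(p+1)^2$ element matrices overlap at any position of the global matrix. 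If you want to salvage your approach, you would need to either reproduce this entrywise/row-sum analysis or prove the spline-specific inverse inequality on genuinely refined maximum-smoothness spaces by a method that never routes through the coefficient-to-function stability constants --- a substantially harder and different argument than the one you sketch.
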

\noindent
For the sake of clarity we will give the proof of
Theorem~\ref{Thm:StippImproved-p} in parts in Lemmas~\ref{le:UBDiagA}-\ref{Lemma:MaxEV=Const}.

It is clear from Table~\ref{tab:maxeig_p_h} that the maximum eigenvalue of the
stiffness matrix is independent of $p$ for the coarsest mesh size $h=1$, and linearly dependent of $p$ asymptotically.
In the analysis, we consider two dimensional problem on the coarsest mesh first and extend it to finer meshes later.
On the coarsest mesh we have B-Spline basis functions of degree $p$ in one variable $\xi$,
\begin{equation*}
N^p_{i,\xi}=(-1)^i {p \choose i} (\xi-1)^{p-i}\xi^i, \quad i=0,1,2,\ldots,p.
\end{equation*}
Similarly in variable $\eta$,
\begin{equation*}
N^p_{j,\eta}=(-1)^j {p \choose j} (\eta-1)^{p-j}\eta^j, \quad
j=0,1,2,\ldots,p.
\end{equation*}
Two variable B-Spline basis functions on the coarsest mesh is given by the
tensor product
\begin{equation*}
N^{p,p}_{i,j,\xi,\eta}=(-1)^{i+j} {p \choose i} {p \choose j} \xi^i \eta^j
(\xi-1)^{p-i} (\eta-1)^{p-j},\quad i,j=0,1,2,\ldots,p.
\end{equation*}

We construct an upper bound of the diagonal entries of the stiffness
matrix on the coarsest mess i.e. single element stiffness matrix $A^{e}$.
\begin{lemma}
\label{le:UBDiagA}
There exists a constant $C$ independent of $p$, such that
\begin{equation}
\label{eq:Bound_on_a_ii}
A^{e}_{(i,j),(i,j)}=a(N^{p,p}_{i,j,\xi,\eta},N^{p,p}_{i,j,\xi,\eta})= \int_0^1
\int_0^1 \nabla N^{p,p}_{i,j,\xi,\eta} \cdot \nabla N^{p,p}_{i,j,\xi,\eta}d\xi
d\eta \le C.
\end{equation}
\end{lemma}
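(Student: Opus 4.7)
The plan is to exploit the fact that on the coarsest mesh (a single element), the basis functions $N^p_{i,\xi}$ coincide up to a global sign with the Bernstein polynomials $B_{i,p}(\xi) = \binom{p}{i}\xi^i(1-\xi)^{p-i}$. This is because $(\xi-1)^{p-i} = (-1)^{p-i}(1-\xi)^{p-i}$, so $|N^p_{i,\xi}| = B_{i,p}(\xi)$, and the two-variable basis is the tensor product of these one-variable objects. Since we are integrating squares, the signs are irrelevant, so the diagonal entry splits cleanly via the tensor-product structure as
\begin{equation*}
A^e_{(i,j),(i,j)} = \Bigl(\int_0^1 (B'_{i,p})^2 d\xi\Bigr)\Bigl(\int_0^1 B_{j,p}^2 d\eta\Bigr) + \Bigl(\int_0^1 B_{i,p}^2 d\xi\Bigr)\Bigl(\int_0^1 (B'_{j,p})^2 d\eta\Bigr).
\end{equation*}
So the lemma reduces to two one-dimensional estimates, uniform in $i$: an $O(1/p)$ bound on $\|B_{i,p}\|_{L^2(0,1)}^2$ and an $O(p)$ bound on $\|B'_{i,p}\|_{L^2(0,1)}^2$.

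For the first estimate, I would use the closed-form beta-function evaluation
\begin{equation*}
\int_0^1 B_{i,p}(\xi)^2 d\xi = \binom{p}{i}^2 \frac{(2i)!(2(p-i))!}{(2p+1)!} = \frac{1}{2p+1}\cdot\frac{\binom{p}{i}^2}{\binom{2p}{2i}},
\end{equation*}
and then invoke the well-known combinatorial inequality $\binom{p}{i}^2 \le \binom{2p}{2i}$ (a Vandermonde-type identity) to conclude $\|B_{i,p}\|_{L^2}^2 \le 1/(2p+1)$ uniformly in $i$.

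For the derivative estimate, the key identity is $B'_{i,p}(\xi) = p\bigl(B_{i-1,p-1}(\xi) - B_{i,p-1}(\xi)\bigr)$, which follows directly from Definition \ref{def:DerBSplines} on the coarsest mesh. Squaring and using $(a-b)^2 \le 2a^2 + 2b^2$ gives
\begin{equation*}
\int_0^1 (B'_{i,p}(\xi))^2 d\xi \le 2p^2 \int_0^1 \bigl(B_{i-1,p-1}^2 + B_{i,p-1}^2\bigr) d\xi \le \frac{4p^2}{2p-1} \le Cp,
\end{equation*}
applying the first estimate at degree $p-1$. Combining the two bounds in the split expression above yields $A^e_{(i,j),(i,j)} \le 2\cdot Cp \cdot \frac{1}{2p+1} \le C$, independent of $p$ and of $(i,j)$.

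The main obstacle will be ensuring the combinatorial inequality $\binom{p}{i}^2 \le \binom{2p}{2i}$ is used correctly at the boundary indices $i=0$ and $i=p$ (where one of the recurrence terms in the derivative formula vanishes and the convention $\binom{p-1}{-1}=0$ must be invoked); a brief check of these endpoint cases will complete the argument. Everything else is a direct calculation from Bernstein polynomial identities.
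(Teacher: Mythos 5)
Your proof is correct, and although it shares the paper's overall skeleton --- split the diagonal entry via the tensor-product structure into a $\xi$-derivative term and an $\eta$-derivative term, evaluate the one-dimensional integrals as beta functions, and finish with a combinatorial bound on $\binom{p}{i}^2\big/\binom{2p}{2i}$ --- you execute both technical steps by a genuinely different route. The paper expands $(\partial_\xi N^{p,p}_{i,j})^2$ as a trinomial, computes the three pieces $I_1,I_2,I_3$ in closed form, and uses the fact that the cross term $I_3$ is negative and cancels exactly half of $I_1+I_2$; it then proves the resulting combinatorial bound $I_a=\binom{p}{i}^2(2i)!\,(2p-2i)!/(2p)!\le C$ by induction on $p$, tracking a ratio of quadratics. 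You instead invoke the Bernstein derivative recurrence $B'_{i,p}=p\,(B_{i-1,p-1}-B_{i,p-1})$ together with $(a-b)^2\le 2a^2+2b^2$, which reduces everything to $L^2$ norms of degree-$(p-1)$ Bernstein polynomials and avoids the cross-term bookkeeping entirely; and you dispatch the combinatorial bound in one line via Vandermonde, $\binom{2p}{2i}=\sum_k\binom{p}{k}\binom{p}{2i-k}\ge\binom{p}{i}^2$, which in fact yields the sharp constant $\|B_{i,p}\|_{L^2(0,1)}^2\le 1/(2p+1)$ rather than an unspecified $C$. What your route buys is brevity and explicit constants (one gets $A^e_{(i,j),(i,j)}\le 8p^2/(4p^2-1)\le 8/3$); what the paper's route buys is closed-form expressions for each piece, which it then reuses almost verbatim in the off-diagonal row-sum estimate of Lemma~\ref{Lemma:BoundEVmax-l1-p} and in the mass-matrix computations. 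The endpoint cases $i\in\{0,p\}$ you flag are indeed harmless under the convention $B_{-1,p-1}=B_{p,p-1}=0$, so I see no gap.
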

\begin{proof}
We provide the major points of the proof.
Some of the details can be found in the research report \cite{Gahalaut-14}.
For all $i,j=0,1,2,\ldots,p$,
\begin{equation*}
\begin{split}
& a(N^{p,p}_{i,j,\xi,\eta},N^{p,p}_{i,j,\xi,\eta})= \int_0^1 \int_0^1 \nabla
N^{p,p}_{i,j,\xi,\eta} \cdot \nabla N^{p,p}_{i,j,\xi,\eta}d\xi d\eta \\
& = {p \choose i}^2 {p \choose j} ^2 \int_0^1 \int_0^1 \big\{i \xi^{i-1}
\eta^j (\xi-1)^{p-i} (\eta-1)^{p-j} +\\
& \hspace{5cm} (p-i) \xi^{i} \eta^j (\xi-1)^{p-i-1} (\eta-1)^{p-j}\big\}^2
d\xi d\eta \\
& + {p \choose i}^2 {p \choose j} ^2 \int_0^1 \int_0^1 \big\{j \xi^{i}
\eta^{j-1} (\xi-1)^{p-i} (\eta-1)^{p-j} + \\
& \hspace{5cm} (p-j) \xi^{i} \eta^j (\xi-1)^{p-i} (\eta-1)^{p-j-1}\big\}^2
d\xi d\eta\\
& \equiv I+I\hspace{-1mm}I.
\end{split}
\end{equation*}
Now,
\begin{equation*}
\begin{split}
I & = {p \choose i} ^2 {p \choose j} ^2 \int_0^1 \int_0^1 \left (i^2
\xi^{2(i-1)} \eta^{2j} (\xi-1)^{2(p-i)} (\eta-1)^{2(p-j)} \right) d\xi d\eta
+\\
& \quad {p \choose i} ^2 {p \choose j} ^2 \int_0^1 \int_0^1 \left ((p-i)^2
\xi^{2i} \eta^{2j} (\xi-1)^{2(p-i-1)} (\eta-1)^{2(p-j)}\right)d\xi d\eta+\\
& \quad {p \choose i} ^2 {p \choose j} ^2 \int_0^1 \int_0^1 \left (2i(p-i)
\xi^{2i-1} \eta^{2j} (\xi-1)^{2p-2i-1} (\eta-1)^{2(p-j)}\right)d\xi d\eta\\
& \equiv I_1+I_{2}+I_{3}.
\end{split}
\end{equation*}
After simplifying (using results on factorial functions), we get
\begin{equation*}
\begin{split}
I_1 & \leq \frac{1}{2}  {p \choose i} ^2
{p \choose j} ^2 \frac{(2i)!  (2p-2i)!}{(2p-1)!} \frac{(2j)!
(2p-2j)!}{(2p+1)!},\\
I_2 & \leq \frac{1}{2}  {p \choose i} ^2
{p \choose j} ^2 \frac{(2i)!  (2p-2i)!}{(2p-1)!} \frac{(2j)!
(2p-2j)!}{(2p+1)!},\\
I_3 &= - \frac{1}{2}  {p \choose i} ^2
{p \choose j} ^2 \frac{(2i)!  (2p-2i)!}{(2p-1)!} \frac{(2j)!
(2p-2j)!}{(2p+1)!}.
\end{split}
\end{equation*}
For all $i=0,1,2,\ldots,p$,
\begin{equation*}
\begin{split}
I & = I_1+I_2+I_3 \\
& = \begin{cases} I_2, & {\text{ if } i=0,}
\\
I_1+I_2+I_3, & {\text{ if } i=1,2,\ldots,p-1,}
\\
I_1, & {\text{ if } i=p,}
\end{cases}\\
& \leq \left\{ {p \choose i} ^2 \frac{(2i)!  (2p-2i)!}{(2p)!} \right \}
\left \{ {p \choose j} ^2 \frac{(2j)!  (2p-2j)!}{(2p)!} \right\}= I_aI_b,
\text{ where} \\
I_a & = {p \choose i} ^2 \frac{(2i)!  (2p-2i)!}{(2p)!} = \frac{p!
p!}{i!i!(p-i)!(p-i)!} \frac{(2i)!  (2p-2i)!}{(2p)!},\\
I_b & = {p \choose j} ^2 \frac{(2j)!  (2p-2j)!}{(2p)!}= \frac{p!
p!}{j!j!(p-j)!(p-j)!} \frac{(2j)!  (2p-2j)!}{(2p)!}.
\end{split}
\end{equation*}
We prove that $I_a \leq C$ by induction on $p$, where $C$ is a constant
independent of $p$.
For $p=1$, we have $I_{a}=1\text{ for all } i=0,1$.
Hence, the result holds for the base case.
Assume that the result holds for $p=m \text{ and for all } i=0,1,2,\ldots,m$,
\begin{equation}
\label{eq:induction}
\displaystyle \frac{m!  m!}{i!i!(m-i)!(m-i)!} \frac{(2i)!  (2m-2i)!}{(2m)!}
\leq C.
\end{equation}
Now we show that the result holds for $p=m+1~\text{and for all}~
i=0,1,2,\ldots,m+1$.
We have
\begin{equation*}
\begin{split}
& \frac{(m+1)!  (m+1)!}{i!i!(m+1-i)!(m+1-i)!} \frac{(2i)!
(2(m+1)-2i)!}{(2(m+1))!} \\
& = \begin{cases}\displaystyle \left( \frac{m^2+2m+1}{4m^2+6m+2}\right) \left(
\frac{4(m-i)^2+6(m-i)+2}{(m-i)^2+2(m-i)+1}\right)
\left\{ \frac{m!  m!}{i!i!(m-i)!(m-i)!} \frac{(2i)!  (2m-2i)!}{(2m)!}\right\},
\\ \hfill{\text{if } i=0,1,2,\ldots,m,}
\\
1, {\text{if } i=m+1.}
\end{cases}
\end{split}
\end{equation*}
Using (\ref{eq:induction}) and since $\displaystyle
\left(\frac{m^2+2m+1}{4m^2+6m+2}\right)\left(\frac{4(m-i)^2+6(m-i)+2}{(m-i)^2+2(m-i)+1}\right)\le1$,
we get for all $i= 0,1,2,\ldots,m+1$,
\begin{equation*}
 \frac{(m+1)! (m+1)!}{i!i!(m+1-i)!(m+1-i)!} \frac{(2i)! (2(m+1)-2i)!}{(2(m+1))!} \leq C.
\end{equation*}
We now have $I_a \leq C$, where $C$ is a constant independent of $p$.
Similarly we can obtain that $I_b \leq C$.
Hence,
\begin{center}
$I=I_aI_b \leq C$.
\end{center}
Proceeding in the same way for ${I\hspace{-1mm}I}$, we can prove that
\begin{equation*}
{I\hspace{-1mm}I} \leq C.
\end{equation*}
Finally,
\begin{equation*}
\displaystyle a(N^{p,p}_{i,j,\xi,\eta},N^{p,p}_{i,j,\xi,\eta}) = I +
{I\hspace{-1mm}I} \leq C.
\end{equation*}
\hfill\end{proof}
 
Thus, we have proved that $a(N^{p,p}_{i,j,\xi,\eta},N^{p,p}_{i,j,\xi,\eta})$
is bounded by a constant independent of $p$.
Since the upper bound of the diagonal entries is the upper bound of all the
entries of the stiffness matrix, the maximum entry of the stiffness matrix is
bounded by a constant independent of $p$,
\begin{equation}
\label{eq:bound_aii_2d}
\displaystyle a(N^{p,q}_{i,j,\xi,\eta},N^{p,q}_{k,l,\xi,\eta})\leq C.
\end{equation}
Similarly, we can prove for three dimensional problem that
\begin{equation}
\label{eq:bound_aii_3d}
\displaystyle
a(N^{p,q,r}_{i,j,k,\xi,\eta,\zeta},N^{p,q,r}_{l,m,n,\xi,\eta,\zeta})\leq C.
\end{equation}
Using (\ref{eq:bound_aii_2d}) and (\ref{eq:bound_aii_3d}) we have
\begin{lemma}
\label{le:LBlambdamax}
The maximum eigenvalue of the element stiffness matrix $A^{e}$ can be bounded below by a
constant $C$ independent of $p$,
\begin{equation*}
\lambda_{\text{max}}(A^{e}) \geq C.
\end{equation*}
\end{lemma}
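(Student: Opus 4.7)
The plan is to exploit the elementary Rayleigh quotient inequality $\lambda_{\text{max}}(A^{e}) \geq A^{e}_{k,k}$, which holds for any diagonal entry of a symmetric positive (semi)definite matrix; it then suffices to exhibit a single diagonal entry of $A^{e}$ that is bounded below by a positive constant independent of $p$.

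In two dimensions I would pick the corner index $(i,j) = (0,0)$ on the reference element, corresponding to the basis function $N^{p,p}_{0,0}(\xi,\eta) = (1-\xi)^{p}(1-\eta)^{p}$. Differentiating explicitly and using the elementary integrals $\int_{0}^{1}(1-\xi)^{k}\,d\xi = 1/(k+1)$ gives
\begin{equation*}
a(N^{p,p}_{0,0}, N^{p,p}_{0,0}) = 2p^{2}\int_{0}^{1}(1-\xi)^{2p-2}\,d\xi \int_{0}^{1}(1-\eta)^{2p}\,d\eta = \frac{2p^{2}}{(2p-1)(2p+1)},
\end{equation*}
which is monotonically decreasing in $p$ toward $1/2$ and is in particular bounded below by $1/2$ for every $p \geq 1$. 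Hence $\lambda_{\text{max}}(A^{e}) \geq 1/2$ uniformly in $p$, as required. This explicit evaluation is exactly the single nonzero piece that survives in the decomposition $I+I\!I$ of the proof of Lemma~\ref{le:UBDiagA} when $i=j=0$, so essentially no new computation is needed beyond selecting the right diagonal entry.

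The only genuine subtlety would arise in extending the argument to dimension $d = 3$, where the analogous corner diagonal entry $a(N_{0,0,0},N_{0,0,0}) = 3p^{2}/((2p-1)(2p+1)^{2})$ decays like $\Theta(1/p)$ and is therefore insufficient by itself. In that case I would exploit the tensor product decomposition $A^{d,e} = \sum_{i=1}^{d} M^{1\text{D},e} \otimes \cdots \otimes A^{1\text{D},e} \otimes \cdots \otimes M^{1\text{D},e}$ and pick a suitable rank-one test vector $v^{(1)}\otimes\cdots\otimes v^{(d)}$ so that the 1D-stiffness factor in one direction (whose Rayleigh quotient on the univariate corner function grows like $p$) balances the 1D-mass decay in the remaining directions. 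For the two-dimensional statement at hand, however, the one-line corner calculation above does the job, and no deeper obstacle is present.
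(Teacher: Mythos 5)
Your proof is correct, and it follows the same underlying idea as the paper's: bound $\lambda_{\text{max}}(A^{e})$ from below by an entry of the matrix. The paper does this via the inequality $\|A^{e}\|_{2}\ge\|A^{e}\|_{\max}$ together with the assertion that $\max_{ij}|a_{ij}|$ \emph{equals} a $p$-independent constant $C$ --- but that assertion is not actually established there, since Lemma~\ref{le:UBDiagA} only proves an \emph{upper} bound on the diagonal entries; a lower bound on some entry is what is really needed, and the paper leaves it implicit. Your version supplies exactly this missing piece: the explicit corner computation $a(N^{p,p}_{0,0},N^{p,p}_{0,0})=2p^{2}/\bigl((2p-1)(2p+1)\bigr)=\tfrac12\bigl(1+\tfrac{1}{4p^{2}-1}\bigr)\ge\tfrac12$ is correct, and combined with the elementary fact $\lambda_{\text{max}}\ge e_{k}^{T}A^{e}e_{k}=A^{e}_{k,k}$ for symmetric matrices it gives a fully rigorous, self-contained proof in two dimensions. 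Your caveat about $d=3$ is also well taken: there the corner diagonal entry decays like $\Theta(1/p)$, so neither your corner argument nor the paper's max-norm argument carries over verbatim, and some additional device (such as the tensor-product/rank-one test vector you sketch, or locating a different entry that stays bounded below) is genuinely required; this is a limitation of the paper's proof as written that your analysis makes visible rather than a defect of your own argument. For the two-dimensional statement as posed, your proof is complete.
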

\begin{proof}
We prove this by using the basics of matrix norms.
The max-norm of a matrix is the element-wise norm defined by
\begin{equation*}
\|A^{e}\|_{\text{max}} = \ \text{max} \{|a_{ij}|\}.
\end{equation*}
From (\ref{eq:Bound_on_a_ii}),
\begin{equation*}
\text{max} \{|a_{ij}|\}=C,
\end{equation*}
where $C$ is independent of $p$.
By the equivalence of norms we have
\begin{equation*}
\|A^{e}\|_2 \ge \|A^{e}\|_{\text{max}} = C.
\end{equation*}
Hence,
\begin{equation*}
\lambda_{\text{max}}(A^{e}) \ge C.
\end{equation*}
\hfill\end{proof}
 
To bound $\lambda_{\text{max}}$ from above we bound the spectral norm by the
$\ell_1$-norm in
\begin{lemma}
\label{Lemma:BoundEVmax-l1-p}
For any fixed $k$ and $l$ such that $0 \le k, l \le p$ and for any $0 \le i,j
\le p$,
\begin{center}
$\displaystyle \sum_{i=0}^{p}
\sum_{j=0}^{p}\big|a(N^{p,p}_{k,l,\xi,\eta},N^{p,p}_{i,j,\xi,\eta})\big| < C$,
\end{center}
where $C$ is a constant independent of $p$.
\end{lemma}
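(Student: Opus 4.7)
The plan is to exploit the tensor-product structure of the basis on the coarsest (single-element) mesh so that the $(p+1)^2$-fold sum collapses to products of \emph{one-dimensional} row sums. Since $N^{p,p}_{i,j,\xi,\eta}(\xi,\eta) = N^{p}_{i}(\xi)\,N^{p}_{j}(\eta)$, one can factor
\begin{equation*}
a(N^{p,p}_{k,l,\xi,\eta}, N^{p,p}_{i,j,\xi,\eta}) = S_{ki}\, M_{lj} + M_{ki}\, S_{lj},
\end{equation*}
where $M_{rs} := \int_{0}^{1} N^{p}_{r}(t)\,N^{p}_{s}(t)\,dt$ and $S_{rs} := \int_{0}^{1} (N^{p}_{r})'(t)\,(N^{p}_{s})'(t)\,dt$ are the 1D mass and stiffness entries. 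Applying the triangle inequality before summing gives
\begin{equation*}
\sum_{i=0}^{p}\sum_{j=0}^{p} |a(N^{p,p}_{k,l},N^{p,p}_{i,j})| \le \Bigl(\sum_{i}|S_{ki}|\Bigr)\Bigl(\sum_{j}|M_{lj}|\Bigr) + \Bigl(\sum_{i}|M_{ki}|\Bigr)\Bigl(\sum_{j}|S_{lj}|\Bigr),
\end{equation*}
so it suffices to bound the 1D row sums for the mass and stiffness matrices separately.

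For the mass row sum I would use that on a single element the $N^{p}_{i}$ are (nonnegative) Bernstein-type polynomials forming a partition of unity. Hence the absolute values drop off and
\begin{equation*}
\sum_{j=0}^{p}|M_{lj}| = \int_{0}^{1} N^{p}_{l}(t)\Bigl(\sum_{j} N^{p}_{j}(t)\Bigr)dt = \int_{0}^{1} N^{p}_{l}(t)\,dt = \frac{1}{p+1}.
\end{equation*}
For the stiffness row sum I would invoke the derivative identity (\ref{eq:BsplineDer}), which on the coarsest mesh reduces to $(N^{p}_{i})'(t) = p\bigl(N^{p-1}_{i-1}(t) - N^{p-1}_{i}(t)\bigr)$ with the usual end conventions. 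The triangle inequality together with nonnegativity and partition of unity of the $N^{p-1}_{\cdot}$ gives the pointwise estimate $\sum_{i}|(N^{p}_{i})'(t)| \le 2p$, and integrating the same identity yields $\int_{0}^{1}|(N^{p}_{k})'(t)|\,dt \le 2$. Therefore
\begin{equation*}
\sum_{i=0}^{p}|S_{ki}| \le \int_{0}^{1} |(N^{p}_{k})'(t)|\sum_{i}|(N^{p}_{i})'(t)|\,dt \le 4p.
\end{equation*}

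Combining the two one-dimensional bounds, the full double sum is controlled by $2\cdot 4p\cdot\frac{1}{p+1} \le 8$, which is the constant $C$ claimed in the statement. The main obstacle, and the reason the result is nontrivial, is that neither of the two factors is individually $O(1)$ in $p$: the stiffness row sum grows \emph{linearly} in $p$ (reflecting the Markov-type growth of derivatives) while the mass row sum shrinks like $1/p$ (reflecting the shrinking $L^{2}$-mass of each Bernstein basis function). The $O(1)$ bound only emerges from the exact cancellation of these two effects in the tensor-product splitting; a naive entrywise estimate like Lemma~\ref{le:UBDiagA} combined with a count of nonzero entries would give only the weaker bound $O(p^{2})$. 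An analogous factorization in $d$ variables produces mixed terms with the same balance and gives the natural $d$-dimensional extension used in the proof of Theorem~\ref{Thm:StippImproved-p}.
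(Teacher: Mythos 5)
Your proof is correct, and it takes a genuinely different route from the paper's. The paper fixes the first row, computes $a(N^{p,p}_{0,0,\xi,\eta},N^{p,p}_{i,j,\xi,\eta})$ explicitly via Beta-function integrals, splits into the cases $i=0$, $1\le i\le p-1$, $i=p$, bounds each partial sum by a geometric-series argument, and then extends to arbitrary $(k,l)$ only by asserting that on a uniform mesh all absolute row sums are of the same order up to a constant --- an assertion it does not actually prove. Your tensor-product factorization $a(N^{p,p}_{k,l},N^{p,p}_{i,j})=S_{ki}M_{lj}+M_{ki}S_{lj}$ sidesteps that gap entirely: the $1$D bounds $\sum_i|S_{ki}|\le 4p$ (from the Bernstein derivative identity, partition of unity, and $\int_0^1 B_j^{p-1}=1/p$) and $\sum_j|M_{lj}|=1/(p+1)$ (from nonnegativity of the mass entries) hold uniformly in $k$ and $l$, so you get the explicit constant $8$ for every row at once, with essentially no integral computation. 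What the paper's approach buys is exact values for the first-row entries (useful if one wanted sharper asymptotics of individual matrix entries); what yours buys is uniformity in the row index, a cleaner identification of the $p\cdot p^{-1}$ cancellation that makes the bound $O(1)$ rather than $O(p^2)$, and an immediate $d$-dimensional extension (each of the $d$ gradient terms contributes $4p\cdot(p+1)^{-(d-1)}$). The only point worth making explicit if you wrote this up is the factorization of the double sum itself, $\sum_{i,j}|S_{ki}||M_{lj}|=\bigl(\sum_i|S_{ki}|\bigr)\bigl(\sum_j|M_{lj}|\bigr)$, which is exactly where the two one-dimensional row sums decouple; as stated your argument is complete.
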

\begin{proof}
We again provide the main steps and for details refer the reader to the research report
\cite{Gahalaut-14}.
We have
\begin{equation*}
N^{p,p}_{0,0,\xi,\eta}=(1-\xi)^{p} (1-\eta)^{p}.
\end{equation*}
We first prove 
\begin{equation*}
\displaystyle \sum_{i=0}^{p}
\sum_{j=0}^{p}\big|a(N^{p,p}_{0,0,\xi,\eta},N^{p,p}_{i,j,\xi,\eta})\big| < C,
\end{equation*}
where $C$ is a constant independent of $p$.
We have
\begin{equation*}
\begin{split}
& a(N^{p,p}_{0,0,\xi,\eta},N^{p,p}_{i,j,\xi,\eta})= \int_0^1 \int_0^1 \nabla
N^{p,p}_{0,0,\xi,\eta} \cdot \nabla N^{p,p}_{i,j,\xi,\eta}d\xi d\eta\\
& = \int_0^1 \int_0^1 \left(\frac{ \partial }{\partial
\xi}N^{p,p}_{0,0,\xi,\eta}\frac{ \partial }{\partial
\xi}N^{p,p}_{i,j,\xi,\eta}\right) d\xi d\eta
+ \int_0^1 \int_0^1 \left(\frac{ \partial }{\partial
\eta}N^{p,p}_{0,0,\xi,\eta}\frac{ \partial }{\partial
\eta}N^{p,p}_{i,j,\xi,\eta}\right) d\xi d\eta \\
& = I+I\hspace{-1mm}I.
\end{split}
\end{equation*}
Now,
\begin{equation*}
\begin{split}
I & = -p {p \choose i}
{p \choose j} \left( \int_0^1 \eta^j (1-\eta)^{2p-j} d\eta \right) \\
& \quad \quad \quad \quad \left( \int_0^1i \xi^{i-1} (1-\xi)^{2p-i-1} d\xi -
\int_0^1(p-i) \xi^{i} (1-\xi)^{2p-i-2} d\xi \right)\\
& I = \begin{cases} \displaystyle {p \choose j}
\frac{p^2}{(4p^2-1)}\frac{(j)!(2p-j)!}{(2p)!} , \hfill \text{if } i=0, \\
\displaystyle -p {p \choose i} {p \choose j}\frac{2p}{(2p+1)}
\frac{(i)!(2p-i)!}{(2p)!}
\frac{(j)!(2p-j)!}{(2p)!} \frac{1}{(2p-i)} \left(1 - \frac{(p-i)}{(2p-i-1)}
\right),\\ \hfill \text{if } i =1,2,\ldots,p-1,\\
\displaystyle - {p \choose j} \frac{2p}{(2p+1)} \frac{(p)!(p)!}{(2p)!}
\frac{(j)!(2p-j)!}{(2p)!}, \hfill \text{if } i = p.
\end{cases}
\end{split}
\end{equation*}
A similar expression can be obtained for $I\hspace{-1mm}I$.
We want to calculate
\begin{center}
$\displaystyle
\sum_{i=0}^{p}\sum_{j=0}^{p}\big|a(N^{p,p}_{0,0,\xi,\eta},N^{p,p}_{i,j,\xi,\eta})\big|$.
\end{center}
For $i = 0$,
\begin{equation*}
\begin{split}
\displaystyle \sum_{j=0}^{p}|I| & = \sum_{j=0}^{p} {p \choose j} \displaystyle
\frac{p^2}{(4p^2-1)}\frac{(j)!(2p-j)!}{(2p)!}
< \frac{1}{3} \sum_{j=0}^{p} {p \choose j} \displaystyle
\frac{(j)!(2p-j)!}{(2p)!}\\
& = \frac{1}{3} \sum_{j=0}^{p} \displaystyle \frac{(p)!(2p-j)!}{(2p)!(p-j)!} <
\frac{1}{3}\left( 1+\frac{1}{2}+\frac{1}{4}+\frac{1}{8}+\ldots+\frac{p!p!}{(2p)!}
\right) <1.
\end{split}
\end{equation*}
For $i =1,2,\ldots,p-1$,
\begin{equation*}
\begin{split}
& \displaystyle \sum_{i=1}^{p-1} \sum_{j=0}^{p}|I| \\
& = \sum_{i=1}^{p-1} \sum_{j=0}^{p} {p \choose i} {p \choose j} \displaystyle
\frac{2p^2}{(2p+1)} \frac{(i)!(2p-i)!}{(2p)!}
\frac{(j)!(2p-j)!}{(2p)!} \frac{1}{(2p-i)} \left(1 - \frac{(p-i)}{(2p-i-1)}
\right)\\
& <\frac{1}{2} \displaystyle \sum_{i=1}^{p-1}
\frac{p!(2p-i-2)!}{(2p-2)!(p-i)!}
< \frac{1}{2}\left(
\frac{1}{2}+\frac{1}{4}+\frac{1}{8}+\ldots+\frac{p!(p-1)!}{(2p-2)!} \right)< 1.
\end{split}
\end{equation*}
For $i =p$,
\begin{equation*}
\begin{split}
\displaystyle \sum_{j=0}^{p}|I| & = \sum_{j=0}^{p} {p \choose j} \displaystyle
\frac{2p}{(2p+1)} \frac{(p)!(p)!}{(2p)!} \frac{(j)!(2p-j)!}{(2p)!}
< \sum_{j=0}^{p} \displaystyle \frac{p!p!p!(2p-j)!}{(2p)!(2p)!(p-j)!}\\
& <\left( \frac{1}{2}+\frac{1}{4}+\frac{1}{8}+\ldots+\frac{(p!)^4}{((2p)!)^2}
\right) <1.
\end{split}
\end{equation*}
Hence,
\begin{equation}
\label{eq:rsum1}
\displaystyle \sum_{i=0}^{p} \sum_{j=0}^{p}|I|< C,
\end{equation}
where $C$ is independent of $p$.
Similarly, we have
\begin{equation}
\label{eq:rsum2}
\displaystyle \sum_{i=0}^{p} \sum_{j=0}^{p}|I\hspace{-1mm}I|< C.
\end{equation}
Therefore, from (\ref{eq:rsum1}) and (\ref{eq:rsum2}),
\begin{equation}
\label{eq:rsum}
\displaystyle \sum_{i=0}^{p}
\sum_{j=0}^{p}\big|a(N^{p,p}_{0,0,\xi,\eta},N^{p,p}_{i,j,\xi,\eta})\big| < C,
\end{equation}
where $C$ is a constant independent of $p$.

We have bounded by a constant the absolute row sum for the first row of the
element stiffness matrix.
Since on a uniform mesh the absolute row sum for all rows of the  element stiffness
matrix are of the same order upto a constant, we get the desired result:
\begin{center}
$\displaystyle \sum_{i=0}^{p}
\sum_{j=0}^{p}\big|a(N^{p,p}_{k,l,\xi,\eta},N^{p,p}_{i,j,\xi,\eta})\big| < C$,
\end{center}
for any fixed $k$ and $l$ such that $0 \le k, l \le p$ and for any $0 \le i,j
\le p$, where $C$ is a constant independent of $p$.
\hfill\end{proof}

Similar results can be obtained for higher dimensions.
The next lemma is a direct consequence of the Lemma~\ref{Lemma:BoundEVmax-l1-p}
and gives an upper bound for the maximum eigenvalue.
\begin{lemma}
\label{le:UBlambdamax}
The maximum eigenvalue of the  element stiffness matrix $A^{e}$ can be bounded above by a
constant $C$ that is independent of $p$:
\begin{equation*}
\lambda_{\text{max}}(A^{e}) \le C.
\end{equation*}
\end{lemma}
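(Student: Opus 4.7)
The plan is to deduce the upper bound on $\lambda_{\mathrm{max}}(A^e)$ directly from the row-sum estimate of Lemma~\ref{Lemma:BoundEVmax-l1-p} via the standard comparison of matrix norms. Since $A^e$ is symmetric positive definite, $\lambda_{\mathrm{max}}(A^e)=\|A^e\|_2$, and for any symmetric matrix one has the classical inequality
\begin{equation*}
\|A^e\|_2 \;\le\; \sqrt{\|A^e\|_1\,\|A^e\|_\infty} \;=\; \|A^e\|_\infty,
\end{equation*}
where the last equality uses $\|A^e\|_1 = \|A^e\|_\infty$ for symmetric matrices (since $\|A\|_1$ is the maximum absolute column sum and $\|A\|_\infty$ the maximum absolute row sum).

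First I would identify the rows of $A^e$ with the double index $(k,l)$, $0\le k,l\le p$, enumerating the tensor-product basis functions on the single element. Then I would write
\begin{equation*}
\|A^e\|_\infty \;=\; \max_{0\le k,l\le p}\sum_{i=0}^{p}\sum_{j=0}^{p}\bigl|a(N^{p,p}_{k,l,\xi,\eta},\,N^{p,p}_{i,j,\xi,\eta})\bigr|.
\end{equation*}
By Lemma~\ref{Lemma:BoundEVmax-l1-p}, every such absolute row sum is bounded above by a constant $C$ independent of $p$, so $\|A^e\|_\infty \le C$.

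Combining the two displays yields $\lambda_{\mathrm{max}}(A^e)=\|A^e\|_2\le \|A^e\|_\infty \le C$, which is the claim. There is really no obstacle here since the bulk of the work has already been performed in Lemma~\ref{Lemma:BoundEVmax-l1-p}; the only point that needs explicit mention is the symmetry of $A^e$ (which follows from the symmetry of the bilinear form $a(\cdot,\cdot)$), so that the $\ell_\infty$-induced norm controls the spectral norm without picking up any $p$-dependent factor. The result extends verbatim to three dimensions, replacing the double sum by the corresponding triple sum and invoking the analogous three-dimensional row-sum estimate.
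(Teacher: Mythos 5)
Your argument is correct and is essentially identical to the paper's own proof: both invoke the row-sum bound of Lemma~\ref{Lemma:BoundEVmax-l1-p}, use symmetry of $A^e$ to equate $\|A^e\|_1$ and $\|A^e\|_\infty$, and conclude via $\|A^e\|_2 \le \sqrt{\|A^e\|_1\|A^e\|_\infty}$. No substantive difference to report.
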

\begin{proof}
We have
\begin{center}
$\displaystyle \sum_{i=0}^{p}
\sum_{j=0}^{p}\big|a(N^{p,p}_{k,l,\xi,\eta},N^{p,p}_{i,j,\xi,\eta})\big| < C$,
\end{center}
where $C$ is a constant independent of $p$, which implies $\|A^{e}\|_{1} \le C $.
Since $A^e$ is a symmetric matrix, we have $\|A^e\|_1 = \|A^e\|_{\infty}$. Therefore, using $\|A^e\|_2 \le \sqrt{\|A^e\|_1  \|A^e\|_{\infty}}$, we get
\begin{center}
$\|A^{e}\|_2\le|A^{e}\|_1 \le C$.
\end{center}
Thus,
\begin{center}
$\lambda_{\text{max}}(A^{e}) \le C.$
\end{center}
\hfill\end{proof}

From Lemmas~\ref{le:LBlambdamax} and \ref{le:UBlambdamax}, for the element stiffness matrix we have
\begin{lemma}
\label{Lemma:MaxEV=Const}
$\lambda_{\text{max}}(A^{e})= C$, where $C$ is a constant independent of $p$.
\end{lemma}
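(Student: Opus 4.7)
The plan is essentially a one-line synthesis: Lemma~\ref{le:LBlambdamax} gives the lower bound $\lambda_{\text{max}}(A^{e}) \ge c_1$ with $c_1$ independent of $p$, and Lemma~\ref{le:UBlambdamax} gives the upper bound $\lambda_{\text{max}}(A^{e}) \le c_2$ with $c_2$ independent of $p$. Chaining these two inequalities immediately yields a two-sided estimate $c_1 \le \lambda_{\text{max}}(A^e) \le c_2$, which is exactly the content of the lemma when the statement $\lambda_{\text{max}}(A^e) = C$ is read (as the surrounding text makes clear) in the sense of ``bounded from above and below by positive constants independent of $p$''.

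Concretely, I would first cite Lemma~\ref{le:UBDiagA} to remind the reader that the element-wise max-norm of $A^e$ is bounded uniformly in $p$, which via $\|A^e\|_2 \ge \|A^e\|_{\max}$ gives the lower bound $\lambda_{\text{max}}(A^e) \ge C$ (this is Lemma~\ref{le:LBlambdamax}). Then I would invoke the uniform row-sum bound of Lemma~\ref{Lemma:BoundEVmax-l1-p}, which together with the symmetry of $A^e$ and the interpolation estimate $\|A^e\|_2 \le \sqrt{\|A^e\|_1\,\|A^e\|_\infty}$ gives $\lambda_{\text{max}}(A^e) \le C$ (this is Lemma~\ref{le:UBlambdamax}). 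Combining the two concludes the proof.

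Since all the technical content has already been established in the preceding lemmas, there is essentially no obstacle here; the only care needed is a cosmetic one, namely to make clear that the ``$= C$'' in the statement really means ``comparable to a constant'', i.e., bounded above and below by positive constants independent of $p$ (possibly different constants, but both $p$-independent). I would therefore phrase the conclusion as a two-sided inequality of the form $c_1 \le \lambda_{\text{max}}(A^e) \le c_2$ with $c_1, c_2 > 0$ independent of $p$, rather than as a literal equality, to avoid any ambiguity for the reader.
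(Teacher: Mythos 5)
Your proposal is correct and matches the paper exactly: the paper introduces this lemma with the single line ``From Lemmas~\ref{le:LBlambdamax} and \ref{le:UBlambdamax}, for the element stiffness matrix we have\ldots'', i.e.\ it simply chains the lower bound of Lemma~\ref{le:LBlambdamax} with the upper bound of Lemma~\ref{le:UBlambdamax}. Your added remark that ``$=C$'' should be read as a two-sided bound $c_1 \le \lambda_{\text{max}}(A^e) \le c_2$ with $p$-independent constants is a reasonable clarification of the paper's (slightly loose) notation.
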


The results in Lemma \ref{le:LBlambdamax} and Lemma \ref{le:UBlambdamax} are proved for an element stiffness matrix on a single element mesh. Obviously these results hold for all element stiffness matrices on finer meshes. Therefore, Lemma \ref{Lemma:MaxEV=Const} holds for all element stiffness matrices on refined meshes.

Now we bound the spectral norm (maximum eigenvalue) of the global stiffness matrix by its $\ell_1$-norm, i.e., the maximum of the row sum over all the rows of the global stiffness matrix. The $\ell_1$-norm of global stiffness matrix will depend on the $\ell_1$ norm of the element stiffness matrices and on their assembly.  Therefore, the bounds for maximum eigenvalue of the global stiffness matrix can be expressed in terms of the maximum eigenvalues of the corresponding element stiffness matrices and the maximum number of overlaps  within the rows and columns of element stiffness matrices (which is the number of  element stiffness matrices that contributes at a particular nonzero position in the global matrix). 

In the process of assembling the global stiffness matrix, the overlaps within the element stiffness matrices depend on the regularity of the basis functions used in the discretization.
For $C^0$- and $C^{p-1}$-continuous basis fucntion the overlaps within the elements will be minimum and maximum, respectively. It is easy to see that  for $C^{p-1}$-continuous basis functions the overlaps will be in $(p+1)^2$ knot spans (e.g., see Fig. 1). 

Using the bound for maximum eigenvalue of element stiffness matrices we state the following lemma for the bound for maximum eigenvalue of global stiffness matrix.

\begin{lemma}
\label{le:Glambdamax}
The maximum eigenvalue of the global stiffness matrix
$\lambda_{\text{max}}(A)= C p^2$, where $C$ is a constant independent of $p$.
\end{lemma}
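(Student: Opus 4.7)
The plan is to bound $\lambda_{\max}(A)$ by $\|A\|_2 \le \sqrt{\|A\|_1 \|A\|_\infty} = \|A\|_1$ (since $A$ is symmetric) and then show that every absolute row sum of the assembled global matrix $A$ is $O(p^2)$. This reduces the task to carefully counting how element contributions pile up during assembly and invoking the already-proved element-level bound from Lemma~\ref{Lemma:BoundEVmax-l1-p} together with Lemma~\ref{Lemma:MaxEV=Const}.

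First I would fix a global basis function $N_i$ and examine the $i$-th row of $A$. By the triangle inequality applied to the assembly identity $A_{ij} = \sum_{e} A^e_{ij}$, one obtains
\begin{equation*}
\sum_{j} |A_{ij}| \le \sum_{e \,:\, \mathrm{supp}(N_i)\cap e \neq \emptyset} \; \sum_{j \,:\, \mathrm{supp}(N_j)\cap e \neq \emptyset} |A^e_{ij}|.
\end{equation*}
The inner sum is (a permutation of) a row sum of the element stiffness matrix indexed by the $(p+1)^2$ local basis functions whose support intersects the knot span $e$; by Lemma~\ref{Lemma:BoundEVmax-l1-p} it is bounded by a constant $C$ independent of $p$.

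Next I would count the number of elements contributing to the outer sum. With maximum regularity $C^{p-1}$ basis functions in two dimensions, the tensor-product B-spline $N_i$ has support in exactly $(p+1)\times(p+1)$ knot spans; hence the outer sum has at most $(p+1)^2$ terms. Combining the two bounds gives
\begin{equation*}
\sum_j |A_{ij}| \le (p+1)^2 \cdot C \le C\, p^2,
\end{equation*}
uniformly in $i$. Therefore $\|A\|_\infty \le C p^2$, and by symmetry $\|A\|_1 = \|A\|_\infty$, so
\begin{equation*}
\lambda_{\max}(A) = \|A\|_2 \le \sqrt{\|A\|_1 \|A\|_\infty} \le C p^2,
\end{equation*}
which is the claimed bound.

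The only real obstacle is the combinatorial bookkeeping in the assembly step: one must justify, for a quasi-uniform tensor-product mesh with $C^{p-1}$ continuity, both that $\mathrm{supp}(N_i)$ meets at most $(p+1)^2$ elements and that within each such element the local row sum is a row sum of an element matrix of the form handled in Lemma~\ref{Lemma:BoundEVmax-l1-p} (up to reindexing). Once these geometric facts are in place, the spectral bound is immediate from the norm inequalities above. The argument extends verbatim to $d$ dimensions, yielding $\lambda_{\max}(A) \le C p^d$, though here only the two-dimensional statement is needed.
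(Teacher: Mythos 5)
Your argument is correct and is essentially the paper's own proof: both bound $\lambda_{\max}(A)$ by $\|A\|_2\le\sqrt{\|A\|_1\|A\|_\infty}=\|A\|_1$ using symmetry, and both control the global row sums by multiplying the element-level row-sum bound of Lemma~\ref{Lemma:BoundEVmax-l1-p} by the $(p+1)^2$ overlap count coming from $C^{p-1}$ tensor-product supports; your version merely spells out the assembly bookkeeping more explicitly than the paper does. Note that, like the paper, you establish only the upper bound $\lambda_{\max}(A)\le Cp^2$ rather than the stated equality, which the paper itself concedes is not sharp in Remark~\ref{re:lambdaconj}.
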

\begin{proof}
In the assembly of the element stiffness matrices, the maximum number of overlaps for a particular nonzero position in the global matrix is $(p+1)^2$. We have
\begin{equation*}
\begin{split}
\|A\|_{1} & \le (\text{maximum number of overlaps}) \times \|A^e\|_{1},\\
& \le  ((p+1)^2) \times C.
\end{split}
\end{equation*}
The inequality $\|A\|_2\le\sqrt{\|A\|_1\|A\|_\infty}$ and symmetry of $A$ imply
\begin{center}
$\|A\|_2\le\|A\|_1 \le C (p+1)^2$.
\end{center}
Hence,
\begin{center}
$\lambda_{\text{max}} (A)  \le Cp^2.$
\end{center}
\hfill\end{proof}

Now, using the bound for maximum eigenvalue given in Lemma~\ref{le:Glambdamax},
the proof of Theorem~\ref{Thm:StippImproved-p} follows directly.

\begin{remark}
\label{re:lambdaconj}
The estimate for maximum eigenvalue given in Lemma \ref{le:Glambdamax} is not sharp. In reality this estimate is not quadratic in $p$.  This can be explained by the following observation. In the overlapping all of the elements of element stiffness matrices are not summed in absolute value. Some of the negative entries overlap with positive entries which in result reduces the row sum of the global stiffness matrix. From our numerical experiments, we conjecture the following.  

The maximum eigenvalue of the global stiffness matrix
$\lambda_{\text{max}}(A)= C p$, where $C$ is a constant independent of $p$.
\end{remark}

\begin{figure}[h!]
  \caption{Global stiffness matrix: Assembly of element stiffness matrices for $p=4$ on $8 \times 8$ spans}
  \centering
   \includegraphics[width=0.8\textwidth,natwidth=610,natheight=642]{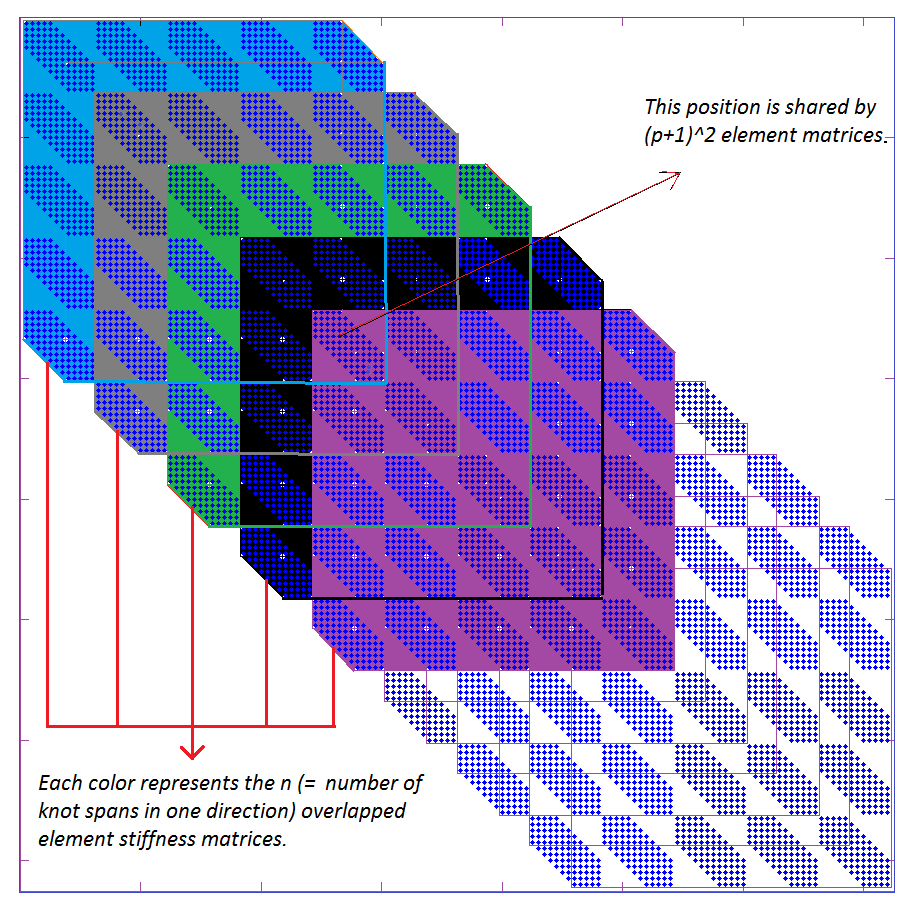}
\end{figure}

\begin{remark}
\label{re:dbc}
We used the condition number of B-Splines $\kappa \sim p2^p$ and $\lambda_{max} \sim p^2$ in reaching
the above estimates.
If we use the de Boor's conjecture (the condition number of B-Splines $\kappa
\sim 2^p$) and $\lambda_{max} \sim p$ (see Remark \ref{re:lambdaconj})  instead, then the upper bound of the stiffness matrix can be
further improved and given by
\begin{equation}
\kappa(A)\leq C p 4^{pd}.
\end{equation}
\end{remark}
% --------------------------------------

\subsection{Mass matrix}
\label{Subsec:CondNum4MassMatrix}

In this section, we give estimates for the condition number of the mass
matrix with estimates for $h$-refinement in
Section~\ref{Subsubec:CondNum4MassMatrix-h} and for $p$-refinement in
Section~\ref{Subsubec:CondNum4MassMatrix-p}.

% --------------------------------------

\subsubsection{$h$-refinement}
\label{Subsubec:CondNum4MassMatrix-h}
%In this section, we estimate the condition number of the mass matrix.
Let $M=(m_{ij})$ be the mass matrix, where
\begin{center}
$m_{ij}=(N_i,N_j)=\displaystyle \int _{\Omega} N_i N_j \quad \quad
i,j=1,2,\ldots,n_h$.
\end{center}
The following lemma gives estimates for the maximum and minimum eigenvalues of
the mass matrix with respect to $h$.
\begin{lemma}
For the extremal eigenvalues of the mass matrix $M=(m_{ij})=(N_i,N_j)$,
\begin{equation*}
C_1 h^2 \leq \lambda_{\text{min}}\leq \lambda_{\text{max}} \leq C_2 h^2,
\end{equation*}
where $C_1, C_2$ are constants independent of $h$.
Furthermore,
\begin{equation*}
c_1 \leq \kappa(M) \leq c_2,
\end{equation*}
where $c_1, c_2$ are constants independent of $h$.
\end{lemma}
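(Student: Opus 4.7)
The plan is to reduce the lemma directly to the basis-stability estimate in Lemma~\ref{le:BilBouH}. For any $v = \sum_{i=1}^{n_h} v_i N_i \in \mathcal{S}_h$, the key observation is that the quadratic form associated with the mass matrix is exactly the $L^2$-norm squared of $v$: namely $\{v_i\} \cdot M \{v_i\} = (v,v) = \|v\|^2$. Therefore the Rayleigh quotient of $M$ coincides with $\|v\|^2/\|\{v_i\}\|^2$.

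First I would invoke the two-sided bound \eqref{eq:BasisCondH} from Lemma~\ref{le:BilBouH} to write
\begin{equation*}
C_1 h^2 \;\le\; \frac{\|v\|^2}{\|\{v_i\}\|^2} \;\le\; C_2 h^2
\qquad \forall\, v \in \mathcal{S}_h \setminus \{0\}.
\end{equation*}
Substituting the identification $\{v_i\} \cdot M \{v_i\} = \|v\|^2$ and taking the infimum and supremum over nonzero coefficient vectors produces
\begin{equation*}
C_1 h^2 \;\le\; \lambda_{\text{min}}(M) \;\le\; \lambda_{\text{max}}(M) \;\le\; C_2 h^2,
\end{equation*}
which is the first asserted inequality.

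For the condition number, the upper bound is immediate by division:
\begin{equation*}
\kappa(M) = \frac{\lambda_{\text{max}}(M)}{\lambda_{\text{min}}(M)} \;\le\; \frac{C_2}{C_1} \;=:\; c_2,
\end{equation*}
a constant independent of $h$. The lower bound $\kappa(M) \ge c_1$ is automatic with $c_1 = 1$, since $M$ is symmetric positive definite and hence $\lambda_{\text{max}} \ge \lambda_{\text{min}} > 0$.

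There is no real obstacle here; the whole content is packaged inside Lemma~\ref{le:BilBouH}, which already accounts for the $p$-dependent constants that might enter through B-spline stability (these are fixed under $h$-refinement). The only minor point to be careful about is verifying that both inequalities in \eqref{eq:BasisCondH} scale with the same power $h^2$, so that the condition number is truly $h$-independent and not merely bounded polynomially in $h$; this is exactly why the two-sided form of Lemma~\ref{le:BilBouH} is essential, rather than a one-sided inverse estimate as was needed for the stiffness matrix.
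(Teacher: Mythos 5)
Your argument is correct and is essentially identical to the paper's own proof: both identify the Rayleigh quotient of $M$ with $\|v\|^2/\|\{v_i\}\|^2$ and apply the two-sided bound \eqref{eq:BasisCondH} of Lemma~\ref{le:BilBouH} to squeeze both extremal eigenvalues between $C_1h^2$ and $C_2h^2$, from which the $h$-independent condition number follows by division. No further comment is needed.
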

\begin{proof}
Using (\ref{eq:BasisCondH}), we bound both the extremal eigenvalues of the
mass matrix.
For the minimum eigenvalue,
\begin{equation*}
\frac{\{v_i\}\cdot M\{v_i\}}{\|\{v_i\}\|^2}=  \frac{(v,v)}{\|\{v_i\}\|^2}\geq
\frac{C_1 h^2 \|\{v\}\|^2}{\|\{v_i\}\|^2} = C_1 h^2.
\end{equation*}
For the maximum eigenvalue,
\begin{equation*}
\frac{\{v_i\}\cdot M\{v_i\}}{\|\{v_i\}\|^2}=  \frac{(v,v)}{\|\{v_i\}\|^2}\leq
\frac{C_2 h^2 \|\{v\}\|^2}{\|\{v_i\}\|^2} = C_2 h^2.
\end{equation*}
So,
\begin{equation*}
C_1 h^2  \leq \lambda_{\text{min}}\leq \lambda_{\text{max}} \leq C_2 h^2.
\end{equation*}
Hence,
\begin{equation*}
c_1  \leq  \kappa(M) \leq c_2.
\end{equation*}
\hfill\end{proof}

% --------------------------------------

\subsubsection{$p$-refinement}
\label{Subsubec:CondNum4MassMatrix-p}

In this section, we estimate the bounds on the extremal eigenvalues and the
condition number of the mass matrices for $p$-refinement.

\begin{lemma}
\label{le:MaPoMijBound}
The element mass matrix is a positive matrix and all of the entries of the element mass matrix
are bounded above by $\displaystyle \frac{C}{(2p+1)^2}$, where $C$ is a
constant independent of $p$.
\end{lemma}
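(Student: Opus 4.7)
The plan is to exploit the fact that on the single reference element, the B-spline basis functions reduce to (a signed multiple of) Bernstein polynomials, so the mass matrix entries have a closed-form expression in terms of the Beta function that can be bounded combinatorially.

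First I would write out the entry explicitly. Since $|N^p_{i,\xi}|=\binom{p}{i}\xi^i(1-\xi)^{p-i}$ and $|N^p_{j,\eta}|$ analogously, any product $N^{p,p}_{i,j,\xi,\eta}\,N^{p,p}_{k,l,\xi,\eta}$ is a nonnegative function on $[0,1]^2$ (the sign factor $(-1)^{i+j+k+l}$ squares out after one rewrites $(\xi-1)^{p-i}=(-1)^{p-i}(1-\xi)^{p-i}$ on each factor, giving an overall $(-1)^{2p}=1$). This immediately yields positivity of the element mass matrix. Applying $\int_0^1 x^a(1-x)^b\,dx=\frac{a!\,b!}{(a+b+1)!}$ to each coordinate separately gives
\begin{equation*}
M^e_{(i,j),(k,l)}=\left[\binom{p}{i}\binom{p}{k}\frac{(i+k)!(2p-i-k)!}{(2p+1)!}\right]\cdot\left[\binom{p}{j}\binom{p}{l}\frac{(j+l)!(2p-j-l)!}{(2p+1)!}\right].
\end{equation*}

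Next I would simplify the bracketed factor. A direct rearrangement of factorials gives
\begin{equation*}
\binom{p}{i}\binom{p}{k}\frac{(i+k)!(2p-i-k)!}{(2p+1)!}
=\frac{\binom{i+k}{i}\binom{2p-i-k}{p-i}}{(2p+1)\binom{2p}{p}},
\end{equation*}
using $\frac{(p!)^2}{(2p+1)!}=\frac{1}{(2p+1)\binom{2p}{p}}$. The key estimate is now a one-line application of Vandermonde's identity: with $m=i+k$, $n=2p-i-k$, $r=p$,
\begin{equation*}
\binom{2p}{p}=\sum_{s=0}^{p}\binom{i+k}{s}\binom{2p-i-k}{p-s}\ \geq\ \binom{i+k}{i}\binom{2p-i-k}{p-i},
\end{equation*}
since every term on the right is nonnegative. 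Hence each bracketed factor is bounded above by $\frac{1}{2p+1}$, and the tensor product structure yields the desired bound $M^e_{(i,j),(k,l)}\leq \frac{C}{(2p+1)^2}$ with $C=1$.

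The proof is essentially routine once the closed-form evaluation is in hand, so there is no serious obstacle. The only step that requires a little care is the combinatorial rearrangement identifying $\binom{p}{i}\binom{p}{k}\frac{(i+k)!(2p-i-k)!}{(2p+1)!}$ with $\frac{\binom{i+k}{i}\binom{2p-i-k}{p-i}}{(2p+1)\binom{2p}{p}}$, so that Vandermonde's identity exactly cancels the central binomial coefficient. I would present positivity and the Beta-function evaluation first, then isolate the Vandermonde bound as the single nontrivial ingredient.
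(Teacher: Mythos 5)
Your proposal is correct, and it follows the paper's proof up to the closed-form evaluation: the paper likewise writes each entry as a product of two one-dimensional Beta integrals, obtaining exactly your factor $\binom{p}{i}\binom{p}{k}\frac{(i+k)!\,(2p-i-k)!}{(2p+1)!}=\frac{1}{2p+1}I_1$ and establishing positivity from the positivity of $I_1$. Where you diverge is in bounding $I_1$: the paper asserts $I_1\le C$ ``by induction on $p$,'' referring back to the induction carried out for the diagonal entries of the stiffness matrix in Lemma~\ref{le:UBDiagA} (which treats only the case $k=i$, so the off-diagonal extension is left somewhat implicit). Your route instead rewrites the factor as $\frac{\binom{i+k}{i}\binom{2p-i-k}{p-i}}{(2p+1)\binom{2p}{p}}$ --- the factorial rearrangement checks out, since $\binom{2p-i-k}{p-i}=\frac{(2p-i-k)!}{(p-i)!\,(p-k)!}$ --- and then dominates the numerator by a single term of the Vandermonde expansion of $\binom{2p}{p}$. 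This is a genuine improvement in two respects: it handles all index pairs $(i,k)$ uniformly without appealing to an induction proved in a different setting, and it yields the explicit sharp constant $C=1$, which is attained at the corner entry $M^e_{(0,0),(0,0)}=\frac{1}{(2p+1)^2}$ and is consistent with the paper's later computation $\Vert M^e\Vert_1=\frac{1}{(p+1)^2}$. The positivity argument (each tensor-product basis function is itself nonnegative after converting $(\xi-1)^{p-i}$ to $(-1)^{p-i}(1-\xi)^{p-i}$) is the same in substance as the paper's.
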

\begin{proof}
We have
\begin{equation*}
\begin{split}
M^{e}_{(i,j),(k,l)} & = (N^{p,p}_{i,j,\xi,\eta},N^{p,p}_{k,l,\xi,\eta}) =
\int_0^1 \int_0^1 N^{p,p}_{i,j,\xi,\eta} \cdot N^{p,p}_{k,l,\xi,\eta}d\xi
d\eta \\
& = \int_0^1 \int_0^1 \left( (-1)^{i+j} {p \choose i} {p \choose j} \xi^i
\eta^j (\xi-1)^{p-i} (\eta-1)^{p-j} \right) \\
& \quad \quad \quad \left( (-1)^{k+l} {p \choose k} {p \choose l} \xi^k \eta^l
(\xi-1)^{p-k} (\eta-1)^{p-l} \right)d\xi d\eta\\
& = \left( I \right)\left( I\hspace{-1mm}I \right),
\end{split}
\end{equation*}
where
\begin{equation*}
\begin{split}
I & = {p \choose i} {p \choose k}
\left( \int_0^1 \xi^{(i+k+1)-1} (1-\xi)^{(2p-i-k+1)-1} d\xi d\eta \right) \\
%& = {p \choose i} {p \choose k} \frac{\Gamma(i+k+1)
%\Gamma(2p-i-k+1)}{\Gamma(2p+2)}\\
& = \frac{ p!p!}{i!k!(p-i)!(p-k)!}\frac{(i+k)!(2p-i-k)!}{(2p+1)!}\\
& =\frac{1}{2p+1}{\left\{ \frac{
p!p!}{i!k!(p-i)!(p-k)!}\frac{(i+k)!(2p-i-k)!}{(2p)!}\right\}}
=\frac{1}{2p+1} I_1,
\end{split}
\end{equation*}
and
\begin{equation*}
\begin{split}
I\hspace{-1mm}I & = {p \choose j} {p \choose l} \left( \int_0^1
\eta^{(j+l+1)-1} (1-\eta)^{(2p-j-l+1)-1}d\xi d\eta \right) \\
%& = {p \choose j} {p \choose l} \frac{\Gamma(j+l+1)
%\Gamma(2p-j-l+1)}{\Gamma(2p+2)}\\
& =\frac{ p!p!}{j!l!(p-j)!(p-l)!}\frac{(j+l)!(2p-j-l)!}{(2p+1)!}\\
& =\frac{1}{2p+1}{\left\{ \frac{
p!p!}{j!l!(p-j)!(p-l)!}\frac{(j+l)!(2p-j-l)!}{(2p)!}
\right\}} = \frac{1}{2p+1} I\hspace{-1mm}I_1.
\end{split}
\end{equation*}
By induction on $p$ we easily obtain that (as we proved in Lemma
\ref{le:UBDiagA}),
\begin{equation*}
\begin{split}
I_1 =\left\{ \frac{
p!p!}{i!k!(p-i)!(p-k)!}\frac{(i+k)!(2p-i-k)!}{(2p)!}\right\} \le C.
\end{split}
\end{equation*}
Similarly, $I\hspace{-1mm}I_1 \le C$.
Therefore
\begin{equation}
\label{eq:Upperbound_mij}
M^{e}_{(i,j),(k,l)} \leq \frac{C}{(2p+1)^2}.
\end{equation}
It is also clear that for all $p\geq1$ and $i,k=0,1,2,\ldots,p$, $I_1 > 0$,
and $I\hspace{-1mm}I_1 > 0$.
Hence, the mass matrix $M^{e}_{(i,j),(k,l)}$ is a positive matrix.
\hfill\end{proof}

\begin{lemma}
\label{le:LBlambdamaxM}
The maximum eigenvalue of the element mass matrix $M^e$ can be bounded below by
\begin{equation*}
\lambda_{\text{max}}(M^{e}) \geq \frac{C}{(2p+1)^2}.
\end{equation*}
\end{lemma}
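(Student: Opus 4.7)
The plan is to reproduce the strategy of Lemma~\ref{le:LBlambdamax}: bound the spectral norm of the element mass matrix $M^{e}$ from below by an easily computable quantity, either a single Rayleigh quotient or the max-norm, and then identify a specific entry whose size is exactly of order $(2p+1)^{-2}$. Lemma~\ref{le:MaPoMijBound} already pins the magnitude of \emph{every} entry of $M^{e}$ from above by $C/(2p+1)^{2}$, so the only missing piece is a matching lower bound on at least one entry.

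First I would exhibit a test vector, most naturally the standard basis vector $e_{(0,0)}$ corresponding to the corner index $(i,j)=(0,0)$, and use
\[
\lambda_{\max}(M^{e}) \;\ge\; \frac{e_{(0,0)}^{T} M^{e} e_{(0,0)}}{e_{(0,0)}^{T}e_{(0,0)}} \;=\; M^{e}_{(0,0),(0,0)}.
\]
Equivalently, one may invoke the norm equivalence $\|M^{e}\|_{2} \ge \|M^{e}\|_{\max}$, exactly as in Lemma~\ref{le:LBlambdamax}, since $M^{e}$ is symmetric.

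Second I would compute the corner entry in closed form. Because $N^{p,p}_{0,0,\xi,\eta}=(1-\xi)^{p}(1-\eta)^{p}$, the integral factors and reduces to an elementary beta integral,
\[
M^{e}_{(0,0),(0,0)} \;=\; \int_{0}^{1}\!\!\int_{0}^{1} (1-\xi)^{2p}(1-\eta)^{2p}\,d\xi\,d\eta \;=\; \frac{1}{(2p+1)^{2}}.
\]
Combining the two steps yields $\lambda_{\max}(M^{e}) \ge 1/(2p+1)^{2} \ge C/(2p+1)^{2}$, as desired.

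There is no genuine obstacle here; the only mild choice is to pick a corner basis function whose self inner product admits a clean closed form of the target order, rather than an interior basis function for which one would still have to extract the correct scaling from products of factorials as in the proof of Lemma~\ref{le:MaPoMijBound}. Together with the matching upper bound on all entries supplied by Lemma~\ref{le:MaPoMijBound}, this will in fact show $\lambda_{\max}(M^{e}) \sim (2p+1)^{-2}$ up to constants independent of $p$.
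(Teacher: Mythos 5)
Your proposal is correct and follows essentially the same route as the paper, which simply invokes the max-norm argument of Lemma~\ref{le:LBlambdamax} together with the entry estimates of Lemma~\ref{le:MaPoMijBound}. In fact your version is slightly more careful: the paper's one-line proof cites only the \emph{upper} bound \eqref{eq:Upperbound_mij} on the entries, whereas a lower bound on $\lambda_{\max}(M^{e})$ requires a matching \emph{lower} bound on at least one entry, which you supply explicitly via the exact corner value $M^{e}_{(0,0),(0,0)}=1/(2p+1)^{2}$.
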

\begin{proof}
Following the proof of Lemma \ref{le:LBlambdamax} and
(\ref{eq:Upperbound_mij}) we get the desired result.
\end{proof}

To bound $\lambda_{\text{max}}$ from above we bound the spectral norm by the
$\ell_1$-norm of the mass matrix.
In the following lemma we first compute the $\ell_1$-norm of the mass matrix.
\begin{lemma}
\label{le:LZidea}
For the mass matrix $M^{e}$ on the coarsest mesh,
\begin{equation*}
\|M^{e}\|_{1} = \frac{1}{(p+1)^2}.
\end{equation*}
\end{lemma}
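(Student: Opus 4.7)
The plan is to exploit three facts about the tensor-product Bernstein-type basis on the coarsest mesh: (i) all entries of $M^e$ are nonnegative (already proved in Lemma~\ref{le:MaPoMijBound}), (ii) the basis is a partition of unity, i.e. $\sum_{k,l} N^{p,p}_{k,l,\xi,\eta} \equiv 1$ on $[0,1]^2$, and (iii) the one-dimensional basis integrates to $1/(p+1)$.

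First, since every $M^e_{(i,j),(k,l)}\ge 0$, the $\ell_1$-norm (maximum absolute column sum, equivalently row sum by symmetry) can be computed without absolute values:
\begin{equation*}
\|M^e\|_1 = \max_{(i,j)} \sum_{k=0}^{p}\sum_{l=0}^{p} M^e_{(i,j),(k,l)}
= \max_{(i,j)} \int_0^1\!\int_0^1 N^{p,p}_{i,j,\xi,\eta}\Bigl(\sum_{k,l} N^{p,p}_{k,l,\xi,\eta}\Bigr)\,d\xi\,d\eta.
\end{equation*}
By the partition of unity property of the tensor-product Bernstein/B-spline basis on a single element, the inner sum equals $1$, so the row sum collapses to $\int_0^1\!\int_0^1 N^{p,p}_{i,j,\xi,\eta}\,d\xi\,d\eta$.

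Next, thanks to the tensor-product structure $N^{p,p}_{i,j,\xi,\eta}=N^p_{i,\xi}N^p_{j,\eta}$, this double integral factors, and each one-dimensional factor is a Beta integral:
\begin{equation*}
\int_0^1 N^p_{i,\xi}\,d\xi = \binom{p}{i}\int_0^1 \xi^i(1-\xi)^{p-i}\,d\xi
= \binom{p}{i}\,\frac{i!\,(p-i)!}{(p+1)!} = \frac{1}{p+1}.
\end{equation*}
Therefore every row sum equals $1/(p+1)^2$, independently of $(i,j)$, which gives $\|M^e\|_1 = 1/(p+1)^2$ as claimed.

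There is essentially no obstacle here: the only things to justify carefully are the partition-of-unity identity (standard for the Bernstein polynomials of $[0,1]^2$ which generate $\mathcal{S}_p$ on the single patch) and the Beta-function computation. The positivity needed to drop the absolute values has already been established in Lemma~\ref{le:MaPoMijBound}. The fact that the row sum is independent of $(i,j)$ makes the maximum trivial and shows that one does not need to track the special behaviour of boundary-adjacent basis functions.
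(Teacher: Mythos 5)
Your proof is correct and follows essentially the same route as the paper's: drop the absolute values by positivity, collapse the row sum via the partition of unity, and evaluate the remaining integral of a single basis function as a (tensor product of) Beta integral(s) equal to $1/(p+1)^2$. The only difference is cosmetic — you factor the computation into two one-dimensional Beta integrals and explicitly cite the positivity from Lemma~\ref{le:MaPoMijBound}, whereas the paper computes the two-dimensional integral directly with Gamma functions and leaves the positivity implicit.
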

\begin{proof}
We have
\begin{equation*}
\begin{split}
\displaystyle \|M\|_{1} & = \max_{\substack{i,j}} \sum_{k,l} \int_0^1 \int_0^1
N_{i,j,\xi,\eta}^{p,p}\cdot N_{k,l,\xi,\eta}^{p,p} d\xi d\eta
= \max_{\substack{i,j}} \sum_{k,l}
(N_{i,j,\xi,\eta}^{p,p},N_{k,l,\xi,\eta}^{p,p}) \\
& = \max_{\substack{i,j}} (N_{i,j,\xi,\eta}^{p,p}, \sum_{k,l}
N_{k,l,\xi,\eta}^{p,p}) = \max_{\substack{i,j}} (N_{i,j,\xi,\eta}^{p,p}, 1)
\quad \quad \Big( \text{since }\sum_{k,l} N_{k,l,\xi,\eta}^{p,p}=1 \Big) \\
& = \max_{\substack{i,j}} \int_0^1 \int_0^1 N_{i,j,\xi,\eta}^{p,p} d\xi d\eta.
\end{split}
\end{equation*}
Now,
\begin{equation*}
\begin{split}
& \int_0^1 \int_0^1 N_{i,j,\xi,\eta}^{p,p} d\xi d\eta =\int_0^1 \int_0^1
(-1)^{i+j} {p \choose i}
{p \choose j} \xi^i \eta^j (\xi-1)^{p-i} (\eta-1)^{p-j}d\xi d\eta\\
& = {p \choose i}
{p \choose j} \left(\int_0^1 \xi^{(i+1)-1} (1-\xi)^{(p-i+1)-1}d\xi \right)
\left( \int_0^1 \eta^{(j+1)-1} (1-\eta)^{(p-j+1)-1} d\eta\right)\\
& = {p \choose i}
{p \choose j} \left( \frac{\Gamma(i+1) \Gamma (p-i+1)}{\Gamma (p+2)}\right)
\left(\frac{\Gamma(j+1) \Gamma (p-j+1)}{\Gamma (p+2)}\right)\\
& = \frac{p!}{i!(p-i)!} \frac{p!}{j!(p-j)!} \frac{i!(p-i)!}{(p+1)!}
\frac{j!(p-j)!}{(p+1)!} = \frac{1}{(p+1)^2}.
\end{split}
\end{equation*}
Hence,
\begin{equation*}
\max_{\substack{i,j}} \int_0^1 \int_0^1 N_{i,j,\xi,\eta}^{p,p} d\xi d\eta =
\frac{1}{(p+1)^2}.
\end{equation*}
\hfill\end{proof} 

The symmetry of $M^{e}$ implies
\begin{equation}
\label{eq:l1normM}
\|M^{e}\|_{\infty}= \|M^{e}\|_{1} = \frac{1}{(p+1)^2}.
\end{equation}
\begin{lemma}
\label{le:UBlambdamaxM}
The maximum eigenvalue of the mass matrix $M^{e}$ on the coarsest mesh can be bounded above by
\begin{equation*}
\lambda_{\text{max}}(M^{e}) \leq C \frac{1}{(p+1)^2}.
\end{equation*}
\end{lemma}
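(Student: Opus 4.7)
The plan is to mirror the argument used in Lemma~\ref{le:UBlambdamax} for the stiffness matrix, replacing the element stiffness matrix $A^e$ with the element mass matrix $M^e$. All the required ingredients are already in place in the preceding two lemmas.

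First I would invoke Lemma~\ref{le:LZidea}, which computes the $\ell_1$-norm of $M^e$ on the coarsest mesh exactly as
\begin{equation*}
\|M^{e}\|_{1} = \frac{1}{(p+1)^2}.
\end{equation*}
Next I would note that $M^e$ is symmetric (since $(N_i,N_j) = (N_j,N_i)$), which is exactly the observation recorded in \eqref{eq:l1normM}: this symmetry forces $\|M^e\|_\infty = \|M^e\|_1$. Hence both the maximum absolute row sum and the maximum absolute column sum equal $1/(p+1)^2$.

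Finally, I would apply the standard matrix norm inequality
\begin{equation*}
\|M^e\|_2 \le \sqrt{\|M^e\|_1\,\|M^e\|_\infty},
\end{equation*}
which combined with the preceding equality yields $\|M^e\|_2 \le 1/(p+1)^2$. Since $M^e$ is symmetric and positive (by Lemma~\ref{le:MaPoMijBound}), its spectral norm coincides with its largest eigenvalue, so
\begin{equation*}
\lambda_{\max}(M^e) = \|M^e\|_2 \le \frac{C}{(p+1)^2},
\end{equation*}
with $C=1$ in fact sufficing. There is no real obstacle here, since all the heavy lifting—the exact evaluation of the Beta-function integrals via partition of unity, and the positivity of the entries—has already been discharged in Lemmas~\ref{le:MaPoMijBound} and \ref{le:LZidea}; the present lemma is essentially a one-line corollary of those results together with the symmetry of $M^e$.
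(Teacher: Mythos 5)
Your proposal is correct and follows essentially the same route as the paper: it combines the exact $\ell_1$-norm computation of Lemma~\ref{le:LZidea}, the symmetry observation in \eqref{eq:l1normM}, and the inequality $\|M^e\|_2 \le \sqrt{\|M^e\|_1\|M^e\|_\infty}$ to bound the spectral norm. The only addition is your explicit remark that the spectral norm of the symmetric positive (semi)definite matrix $M^e$ equals its largest eigenvalue, which the paper leaves implicit.
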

\begin{proof}
We have the following inequality for matrix norms
\begin{equation*}
\|M^{e}\|_{2}^2 \leq \|M^{e}\|_{1}\|M^{e}\|_{\infty}.
\end{equation*}
Using Lemma \ref{le:LZidea} and (\ref{eq:l1normM}) we get the bound on the
spectral norm of $M^{e}$,
\begin{equation*}
\|M^{e}\|_{2} \leq C \frac{1}{(p+1)^2}.
\end{equation*}
\hfill\end{proof}

\begin{remark}
In fact, for the coarsest mesh we get $ \lambda_{\text{max}}(M^{e}) =
\displaystyle \frac{1}{(p+1)^2}$ by Lemma \ref{le:MaPoMijBound} and by
\cite[Lemma 2.5]{Varga-96}.
\end{remark}

Using the same argument as in the stiffness matrix case we can give the estimate for the maximum eigenvalue of the global mass matrix using the estimates for the element mass matrices. 

\begin{lemma}
\label{le:GUBlambdamaxM}
The maximum eigenvalue of the global mass matrix $M$ can be bounded above by
\begin{equation*}
\lambda_{\text{max}}(M) \leq C,
\end{equation*}
where $C$ is a constant independent of $p$ (may depend on $h$).
\end{lemma}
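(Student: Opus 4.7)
The plan is to mirror the proof of Lemma~\ref{le:Glambdamax} for the stiffness matrix. First, I would use Lemma~\ref{le:LZidea} to obtain the $\ell_1$ norm of the element mass matrix on the coarsest mesh, namely $\|M^e\|_1 = 1/(p+1)^2$ in two dimensions, and more generally $\|M^e\|_1 \le C/(p+1)^d$ in $d$ dimensions by the same calculation. For a mesh with element diameter $h$, the element mass matrix entries scale by the Jacobian factor $h^d$ arising from the change of variables to the reference element, so $\|M^e\|_1 \le C h^d / (p+1)^d$ with $C$ independent of $p$.

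Next I would exploit the assembly structure in exactly the same way as in Lemma~\ref{le:Glambdamax}. Each nonzero entry of the global matrix $M$ is a sum of contributions from at most $(p+1)^d$ element mass matrices, because for $C^{p-1}$ basis functions the supports overlap over $(p+1)^d$ knot spans; this sparsity pattern is determined by the basis functions alone and is therefore the same for mass and stiffness matrices. Consequently,
\begin{equation*}
\|M\|_1 \le (\text{maximum number of overlaps}) \times \|M^e\|_1 \le (p+1)^d \cdot \frac{C h^d}{(p+1)^d} = C h^d,
\end{equation*}
which is independent of $p$.

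Finally, since $M$ is symmetric we have $\|M\|_\infty = \|M\|_1$, so the standard matrix-norm inequality $\|M\|_2 \le \sqrt{\|M\|_1 \|M\|_\infty}$ yields $\|M\|_2 \le \|M\|_1 \le C h^d$. Therefore $\lambda_{\max}(M) = \|M\|_2 \le C$, where $C$ is independent of $p$ but absorbs the $h^d$ factor, as the statement allows. The only genuinely delicate step, which is nevertheless routine, is justifying the $h^d$ scaling of $\|M^e\|_1$ on finer meshes via an elementary change of variables, taking care that the basis-function normalization on the reference element matches the one used in Lemma~\ref{le:LZidea}; beyond this, the argument is a direct transcription of the stiffness-matrix proof, with the favorable $1/(p+1)^d$ factor exactly cancelling the $(p+1)^d$ overlap count.
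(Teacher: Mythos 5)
Your proof is correct and follows essentially the same route as the paper: the paper's own argument likewise multiplies the overlap count $(p+1)^2$ by the element-matrix bound $\|M^e\|_1 = 1/(p+1)^2$ from Lemma~\ref{le:LZidea}, invokes symmetry and $\|M\|_2 \le \sqrt{\|M\|_1\|M\|_\infty}$, and absorbs the mesh-dependence into the constant. Your explicit treatment of the $h^d$ Jacobian scaling is a detail the paper leaves implicit, but it does not change the argument.
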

\begin{proof}
Following the proof of Lemma \ref{le:Glambdamax}, we have
\begin{equation*}
\begin{split}
\lambda_{\text{max}} (M) & \le C ((p+1)^2) \times \displaystyle \frac{1}{(p+1)^2}.
\end{split}
\end{equation*}
Hence,
\begin{center}
$\lambda_{\text{max}} (M)  \le C.$
\end{center}
\hfill\end{proof}
\begin{remark}
Unlike the stiffness matrix case, this estimate for the mass matrix case is sharp. Since all of the entries of the mass matrix are positive, therefore in the overlapping, the entries of element mass matrices are always added up without any cancellations or reductions.  
\end{remark}

\begin{lemma}
\label{le:LBlambdaminM}
There exists a constant $C$ that is independent of $p$ such that the minimum
eigenvalue of the mass matrix $M$ can be bounded below by
\begin{equation*}
\lambda_{\text{min}}(M) \ge \frac{C}{p^416^p}.
\end{equation*}
\end{lemma}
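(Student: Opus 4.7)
The plan is to apply the lower B-Spline stability bound from Lemma~\ref{le:BilBouP} directly to the Rayleigh quotient of the mass matrix, mirroring the minimum-eigenvalue step used in Theorem~\ref{th:ConBouP} but without needing coercivity of the bilinear form.

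First, I would fix an arbitrary nonzero $v \in \mathcal{S}_p$, expand it as $v = \sum_{i=1}^{n_p} v_i N_i$, and observe that since $M = (m_{ij}) = ((N_i,N_j))$, the associated quadratic form collapses to the $L^2$-norm squared:
\begin{equation*}
\{v_i\} \cdot M \{v_i\} = \sum_{i,j} v_i v_j (N_i, N_j) = \Bigl(\sum_i v_i N_i, \sum_j v_j N_j\Bigr) = \|v\|^2.
\end{equation*}
Consequently the Rayleigh quotient of $M$ equals $\|v\|^2 / \|\{v_i\}\|^2$.

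Second, I would invoke the left-hand inequality of Lemma~\ref{le:BilBouP}, which is already phrased in terms of a tensor-product B-Spline basis of degree $p$, to obtain
\begin{equation*}
\|v\|^2 = \Bigl\|\sum_{i=1}^{n_p} v_i N_i\Bigr\|^2 \ge \frac{C_1}{(p^2 4^p)^2} \|\{v_i\}\|^2 = \frac{C_1}{p^4 16^p} \|\{v_i\}\|^2.
\end{equation*}
Dividing by $\|\{v_i\}\|^2$ and taking the infimum over nonzero coefficient vectors then yields $\lambda_{\min}(M) \ge C/(p^4 16^p)$, which is precisely the claim.

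There is no substantial obstacle in this proof; the entire analytic difficulty has already been absorbed into Lemma~\ref{le:BilBouP}, which itself rests on the Scherer--Shadrin estimate $\kappa < (p2^p)^d$ for tensor-product B-Splines in Lemma~\ref{Lemma:SS99-d-dims}. The argument is in fact cleaner than the corresponding stiffness-matrix calculation because no inverse estimate or coercivity constant is required: the Rayleigh quotient of $M$ is exactly a ratio of two norms whose comparability is given by the B-Spline condition number. Consequently, the resulting exponent $4^p$ in the denominator is inherited directly, and no further dimension-dependent combinatorics on element overlaps (as in Lemma~\ref{le:Glambdamax}) enters the bound.
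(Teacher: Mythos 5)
Your proposal is correct and follows exactly the paper's own argument: the paper likewise identifies $\{v_i\}\cdot M\{v_i\}$ with $\|v\|^2$ and applies the left-hand inequality of Lemma~\ref{le:BilBouP} (i.e., \eqref{eq:BasConP}) to the Rayleigh quotient to obtain $\lambda_{\min}(M)\ge C/(p^{4}16^{p})$. There is no difference in substance between your route and the paper's.
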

\begin{proof}
To bound the minimum eigenvalue from below we use the left hand side
inequality of (\ref{eq:BasConP}):
\begin{align*}
\frac{\{v_i\}\cdot M\{v_i\}}{\|\{v_i\}\|^2}= & \frac{(v,v)}{\|\{v_i\}\|^2}\geq
\frac{\displaystyle\frac{C}{p^416^p} \|\{v_i\}\|^2}{\|\{v_i\}\|^2}
= \frac{C}{p^416^p}.
\end{align*}
Therefore,
$\displaystyle \lambda_{\text{min}}(M) \ge \frac{C}{p^416^p},$ where $C$ is a
constant that is independent of $p$.
\hfill\end{proof}

The following lemma gives us the upper bound for the condition number of the
mass matrix.
\begin{lemma}
\label{le:ConBouMasP}
The condition number of the mass matrix $M$ is bounded above by
\begin{equation*}
\kappa(M)\leq Cp^416^p,
\end{equation*}
where $C$ is a constant that is independent of $p$.
\end{lemma}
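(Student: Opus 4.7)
The plan is to assemble the bound directly from the two preceding lemmas that sandwich the spectrum of $M$. Since $M$ is symmetric positive definite, its spectral condition number equals the ratio $\lambda_{\text{max}}(M)/\lambda_{\text{min}}(M)$, so once we have an upper bound on the numerator and a lower bound on the denominator that are both $p$-explicit, the result is a one-line combination.

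First I would invoke Lemma~\ref{le:GUBlambdamaxM} to assert $\lambda_{\text{max}}(M) \le C$ with $C$ independent of $p$; this is the step that encodes all the work done on analyzing the row sums of the element mass matrix and on counting at most $(p+1)^2$ overlaps during assembly (together with the cancellation-free fact noted in the remark after that lemma, since $M$ has nonnegative entries). Then I would invoke Lemma~\ref{le:LBlambdaminM} to assert $\lambda_{\text{min}}(M) \ge C/(p^4 16^p)$; this is the step that encodes the stability estimate \eqref{eq:BasConP} of the tensor-product B-spline basis, which in turn rests on Scherer--Shadrin (Lemma~\ref{Lemma:SS99}) and its tensor-product extension (Lemma~\ref{Lemma:SS99-d-dims}).

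Finally I would combine these two bounds:
\begin{equation*}
\kappa(M) \;=\; \frac{\lambda_{\text{max}}(M)}{\lambda_{\text{min}}(M)} \;\le\; \frac{C}{\,C/(p^4 16^p)\,} \;=\; C\,p^4 16^p,
\end{equation*}
absorbing the ratio of the two unrelated constants into a single generic $C$, as is standard throughout the paper.

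There is essentially no obstacle here, since all the analytic content has already been pushed into the preceding lemmas; the only mild bookkeeping point is to make sure the ``$C$'' in the two lemmas and the ``$C$'' in the conclusion are understood as generic constants, independent of $p$, that may differ from line to line (as stated in the introduction). If one wanted the $d$-dimensional analogue, the same three-line argument would yield $\kappa(M)\le C\,p^{2d}\,4^{pd}$, consistent with the abstract, by using the $d$-dimensional version of the stability constant $\gamma = (p^2 4^p)^{d/2}$ implicit in \eqref{eq:BspKapD} and the obvious $d$-dimensional extensions of Lemmas~\ref{le:GUBlambdamaxM} and~\ref{le:LBlambdaminM}.
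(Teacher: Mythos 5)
Your proposal is correct and follows essentially the same route as the paper: the paper's own proof likewise just combines the upper bound $\lambda_{\text{max}}(M)\le C$ (from the assembly/overlap argument) with the lower bound $\lambda_{\text{min}}(M)\ge C/(p^{4}16^{p})$ from Lemma~\ref{le:LBlambdaminM} and takes the ratio. The only cosmetic difference is that the paper cites Lemma~\ref{le:UBlambdamaxM} while actually using the global bound of Lemma~\ref{le:GUBlambdamaxM}, which is the lemma you (more appropriately) invoke.
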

\begin{proof}
From Lemma \ref{le:UBlambdamaxM} and Lemma \ref{le:LBlambdaminM},
\begin{equation*}
\frac{C}{p^416^p} \leq \lambda_{\text{min}}\leq \lambda_{\text{max}} \leq C.
\end{equation*}
Hence,
\begin{equation*}
\kappa(M)\leq Cp^416^p.
\end{equation*}
\hfill\end{proof}

\begin{remark}
\label{re:dbc_m}
The above bound can be easily generalized for a $d$-dimensional problem:
\begin{equation}
\kappa(M)\le p^{2d}4^{pd}.
\end{equation}
Following Remark \ref{re:dbc} and using de Boor's conjecture
(\ref{eq:CondNumConjecture}), the upper bound for the condition number of the
mass matrix can be further improved and given by
\begin{equation}
\kappa(M) \le \displaystyle 4^{pd}.
\end{equation}
\end{remark}

\begin{remark}
We have done all the analysis on the parametric domain $(0,1)^2$.
To get the results for the physical domain we can define an invertible NURBS
geometrical map from the parametric domain to the physical domain.
With suitable transformations we get the results for the physical domain.
For details, see \cite{BVCHughesS-06}.
\end{remark}

% -----------------------------------------------------------------------------

\section{Numerical results}
\label{NumericalResults}

In this section, we provide the numerical results for $h$-refined 
(in Section~\ref{Subsec:Numerical-h})
and
$p$-refined 
(in Section~\ref{Subsec:Numerical-p})
stiffness and mass matrices.
The numerical discretizations are performed using the Matlab toolbox GeoPDEs
\cite{FalcoRV-11,GeoPDEs}.

% --------------------------------------

\subsection{$h$-refinement}
\label{Subsec:Numerical-h}

For $h$-refinement, the condition number of the stiffness matrix is shown
in Table \ref{tab:condHC0p}.
Numerical results are provided from $p=2$ to $p=5$.
In the classical finite element method, the condition number of the stiffness
matrix is of order $h^{-2}$ even for a coarse mesh-size.
However, in isogeometric discretizations, for higher $p$ on coarse mesh, the
condition number is highly influenced by the stability constant of B-Splines.
The condition number of B-Splines heavily depends on the polynomial degree
(see Section~\ref{Sec:SplineCondNum}) and scales as $(p2^p)^d$.
The factor $(p2^p)^d$ dominates the factor $h^{-2}$ for coarse meshes.
Nevertheless, the numerical results support the theoretical findings
asymptotically (for reasonably refined meshes) for any polynomial degree.

\begin{table}[!b]
\caption{ Condition number of the stiffness matrix $A$}
\label{tab:condHC0p}
\begin{center}
\begin{tabular}{|c|c|c|c|c|c|c|c|}\hline
\backslashbox{$p$}{$h^{-1}$} & 2 & 4 & 8 & 16 & 32 & 64 & 128 \\ \hline
2 & 4.00  & 4.00 & 5.22 & 19.77 & 78.14 & 311.58 & 1245.36 \\\hline
3 & 30.93 &  29.51 & 29.19& 28.56 & 82.10 & 327.21 & 1307.67 \\\hline
4 & 339.92 &  269.23 & 240.03 & 222.55 & 215.00 & 381.73 & 1525.40 \\\hline
5  & 4177.20 & 3220.60 &  2148.25 & 1812.58 &  1700.63 & 1688.11 & 1781.51\\\hline
\end{tabular}
\end{center}
\end{table}

In Table~\ref{tab:condHC0pM}, we present the condition number of the mass
matrix.
We see that the condition number is bounded uniformly by a constant
independent of $h$, which confirms the theoretical estimates.

\begin{table}[!b]
\caption{ Condition number of the mass matrix $M$}
\label{tab:condHC0pM}
\begin{center}
\begin{tabular}{|c|c|c|c|c|c|c|c|}\hline
\backslashbox{$p$}{$h^{-1}$}  & 2 & 4 & 8 & 16 & 32 & 64 &128 \\\hline
2 &  89.679& 109.68& 108.51& 109.85& 111.29 & 111.69 & 111.79\\\hline
3 &  915.558& 799.941& 737.379& 708.010& 715.89 & 719.45 & 720.33 \\\hline
4 &  11773.17& 6795.46& 5381.96& 4762.53& 4750.07 & 4779.41 & 4786.90 \\\hline
5 &  163371.70& 77448.11& 42580.04& 33560.40& 32587.27 & 32808.69 & 32871.70 \\\hline
\end{tabular}
\end{center}
\end{table}

% --------------------------------------

\subsection{$p$-refinement}
\label{Subsec:Numerical-p}

We perform numerical experiments for $p$-refinement
to obtain the maximum and minimum
eigenvalues, and the condition number of the stiffness matrix and the mass
matrix.
The eigenvalues and the condition number are obtained on the coarsest mesh and
the finest mesh.
For higher $p$ ($p>10$) roundoff errors start contaminating the results and we
stop reporting with $10$. %For $p=20$ and $p=30$ maximum eigenvalue of stiffness matrix is also computed.

In Tables~\ref{tab:condP1} and \ref{tab:condP1F}, we present the extremal
eigenvalues and the condition number of the stiffness matrix for $p=2$ to
$p=10$.
We observe that the maximum eigenvalue scales as a constant independent of $p$ for the coarsest mesh
and linearly dependent on $p$ for refined meshes, and that the minimum eigenvalue is bounded from below by the bound given in
Theorem \ref{th:ConBouP}.

The extremal eigenvalues and the condition number of the mass matrix for $p=2$ to $p=10$ are presented in Table \ref{tab:condP1M} and Table
\ref{tab:condP1MF}.
%We perform numerical results for $p=2$ to $p=10$.
Numerical results confirm the theoretical estimates given in Lemma
\ref{le:UBlambdamaxM}, Lemma \ref{le:GUBlambdamaxM}, Lemma \ref{le:LBlambdaminM}, and Lemma
\ref{le:ConBouMasP}.

\begin{table}[!h]
\caption{ $\lambda_{\text{max}}$, $\lambda_{\text{min}}$, and $\kappa(A)$  on the coarsest mesh}
\label{tab:condP1}
\begin{center}
\begin{tabular}{|c|c|c|c|}
\hline
$p$ & $\lambda_{\text{max}}$ & $\lambda_{\text{min}}$ & $\kappa(A)$\\\hline
2   & 0.35 & 3.5e-01 & 1.0e+00 \\\hline
3   & 0.45 & 3.8e-02 & 1.1e+01\\\hline
4   & 0.41 & 2.9e-03 & 1.3e+02\\\hline
5   & 0.35 & 2.1e-04 & 1.6e+03\\\hline
6   & 0.33 & 1.5e-05 & 2.1e+04\\\hline
7   & 0.33 & 1.1e-06 & 2.9e+05\\\hline
8   & 0.31 & 7.8e-08 & 4.0e+06\\\hline
9   & 0.30 & 5.4e-09 & 5.6e+07\\\hline
10 & 0.30 & 3.7e-10 & 8.1e+08\\\hline
\end{tabular}
\end{center}
\end{table}

\begin{table}
\caption{ $\lambda_{\text{max}}$, $\lambda_{\text{min}}$, and $\kappa(A)$  on the finest mesh}
\label{tab:condP1F}
\begin{center}
\begin{tabular}{|c|c|c|c|}
\hline
$p$ & $\lambda_{\text{max}}$ & $\lambda_{\text{min}}$ & $\kappa(A)$\\\hline
2   & 1.50  & 1.2e-03& 1.2e+03 \\\hline
3    & 1.58 & 1.2e-03& 1.3e+03\\\hline
4    & 1.84& 1.2-03 & 1.5e+03\\\hline
5     & 2.14& 1.2e-03  & 1.8e+03\\\hline
6   & 2.47  & 1.8e-04 & 1.4e+04\\\hline
7   & 2.80 & 2.6e-05  & 1.1e+05\\\hline
8     & 3.14& 3.6e-06  & 8.8e+05\\\hline
9    & 3.47& 4.9e-07  & 7.1e+06\\\hline
10   & 3.81& 6.6e-08 & 5.7e+07\\\hline
\end{tabular}
\end{center}
\end{table}

\begin{table}[!b]
\caption{ $\lambda_{\text{max}}$, $\lambda_{\text{min}}$, and $\kappa(M)$  on the coarsest mesh}
\label{tab:condP1M}
\begin{center}
\begin{tabular}{|c|c|c|c|}
\hline
$p$ & $\lambda_{\text{max}}$ & $\lambda_{\text{min}}$ & $\kappa(M)$\\\hline
2   & 1.1e-01& 1.1e-03 &1.0e+02\\\hline
3   & 6.2e-02& 5.1e-05 &1.2e+03 \\\hline
4   & 4.0e-02& 2.5e-06 & 1.5e+04\\\hline
5   &2.7e-02& 1.3e-07& 2.1e+05 \\\hline
6   & 2.0e-02& 6.9e-09 &2.9e+06\\\hline
7   & 1.5e-02& 3.7e-10 & 4.1e+07\\\hline
8   & 1.2e-02& 2.0e-11 & 5.9e+08\\\hline
9   & 1.0e-02& 1.1e-12& 8.5e+09\\\hline
10 & 8.2e-03&6.6e-14& 1.2e+11 \\\hline
\end{tabular}
\end{center}
\end{table}

\begin{table}[!t]
\caption{ $\lambda_{\text{max}}$, $\lambda_{\text{min}}$, and $\kappa(M)$  on the finest mesh}
\label{tab:condP1MF}
\begin{center}
\begin{tabular}{|c|c|c|c|}
\hline
$p$ & $\lambda_{\text{max}}$ & $\lambda_{\text{min}}$ & $\kappa(M)$\\\hline
2   & 6.1e-05& 5.5e-07  & 1.1e+02  \\\hline
3  & 6.1e-05& 8.5e-08  & 7.2e+02  \\\hline
4  & 6.1e-05 & 1.3e-08 & 4.8e+03 \\\hline
5 & 6.1e-05 & 1.9e-09  & 3.3e+04 \\\hline
6  & 6.1e-05 & 2.6e-10 & 2.3e+05 \\\hline
7  & 6.1e-05& 3.7e-11  & 1.7e+06 \\\hline
8  & 6.1e-05 & 5.1e-12 & 1.2e+07 \\\hline
9  & 6.1e-05 & 6.9e-13 & 8.9e+07 \\\hline
10 & 6.1e-05 & 9.2e-14  & 6.6e+08\\\hline
\end{tabular}
\end{center}
\end{table}

% -----------------------------------------------------------------------------

\section{Conclusions}
\label{Sec:Conclusions}

We have provided the bounds for the minimum eigenvalue, maximum eigenvalue,
and the condition numbers of the stiffness and mass matrices for the Laplace
operator with $h$- and $p$-refinements of the isogeometric discretizations
that are based on B-Spline (NURBS) basis functions.
We proved that in the $h$-refinement case, like the classical finite element
method, the condition number of the stiffness matrix scales as $h^{-2}$.
For the mass matrix, it scales as constant independent of $h$.
For the $p$-refinement case, we proved that the condition number of the stiffness
and mass matrices grow exponentially in $p$.

%For $p$-refinement, the estimates for the maximum eigenvalues of the stiffness
%and mass matrices are sharp and cannot be improved.
The estimates for the minimum eigenvalues of the stiffness and mass
matrices depend on the stability constant of B-Splines.
In reaching these estimates we have used the stability constant of B-Splines
as $p2^p$.
  Using
the de Boor's conjecture (the stability constant of B-Splines given by $2^p$,
which is the best known bound), these estimates can be
further improved according to Remarks~\ref{re:dbc} and \ref{re:dbc_m}.

Unfortunately, a sharp estimate for the stability constant is unknown.
Therefore, a sharp estimate for the minimum eigenvalue cannot be determined
at this time and will be the subject of future research by us and others.
It is a very difficult problem.

% -----------------------------------------------------------------------------

\section*{Acknowledgments}

The authors would like to thank Prof. U. Langer (Johannes Kepler University
Linz, Austria) and 
Prof. L. Zikatanov (Pennsylvania Stae University, USA) for helpful suggestions
on the topic of this paper.

\end{document}